\documentclass{article}
\usepackage[margin=1.5 in]{geometry}
\usepackage{comment}
\usepackage{graphics}	
\usepackage[utf8]{inputenc}
\usepackage[english]{babel}										%
\usepackage{amsmath}
\usepackage{tikz}
\usetikzlibrary{cd}
\usepackage{enumerate}
\usepackage{bbm}
\usepackage{tikz-cd}
\tikzcdset{scale cd/.style={every label/.append style={scale=#1},
    cells={nodes={scale=#1}}}}
\usepackage{amssymb}
\usepackage{mathrsfs}
\tikzcdset{scale cd/.style={every label/.append style={scale=#1},
    cells={nodes={scale=#1}}}}
\usepackage{amsthm}
\newtheorem{theorem}{Theorem}
\newtheorem{lemma}[theorem]{Lemma}
\newtheorem{prop}[theorem]{Proposition}
\newtheorem{ex}[theorem]{Example}
\newtheorem{deff}[theorem]{Definition}

\newtheorem{rmk}[theorem]{Remark}


\newcommand{\catC}{\mathscr{C}}
\newcommand{\Res}{\text{Res}}

\newcommand{\Hom}{\text{Hom}}

\newcommand{\Z}{\mathbb{Z}}

\newcommand{\Sym}{\text{Sym}}

\newcommand{\id}{\mathrm{id}}

\newcommand{\Ob}{\mathsf{Ob}}
\newcommand{\Func}{\mathsf{Func}}

\newcommand{\com}{\text{com}}
\newcommand{\catD}{\mathscr{D}}

\newcommand{\Ind}{\text{Ind}}
\newcommand{\catB}{\mathscr{B}}

\begin{document}

\author{Ville Nordström}
\title{A decomposition theorem for the Hochschild homology of symmetric powers of a dg category}
\date{}
\maketitle
\begin{abstract} We prove a conjecture by Belmans, Fu and Krug concerning the Hochschild homology of the symmetric powers of a small dg category $\catC$. More precisely, we show that these groups decompose into pieces that only depend on the Hochschild homology of the dg category $\catC$. 
\end{abstract}
\section{Introduction}
In \cite{belman} it was shown (see corollary D) that for any smooth projective variety $X$ over a field of characteristic zero we have
$$\bigoplus_{n\geq 0} HH_\bullet([\Sym^n(X)])t^n\cong \Sym^\bullet(\bigoplus_{i\geq 1}HH_\bullet(X)t^i)$$
where $[\Sym^n(X)]$ refers to the quotient stack $[X^n/S_n]$. The notation on the right refers to the graded symmetric algerba, where the grading on $\bigoplus_{i\geq 1}HH_\bullet(X)t^i$ is the usual grading on Hochschild homology and the formal variable $t$ is of degree $0$.
The authors then formulate the more general conjecture (see conjecture 3.24 in \cite{belman}) that for any smooth proper dg category $\catC$ we have an isomorphism
\begin{align}\label{iso}\bigoplus_{n\geq 0} HH_\bullet(\Sym^n(\catC))t^n\cong \Sym^\bullet(\bigoplus_{i\geq 1}HH_\bullet(\catC)t^i)\end{align}
where $\Sym^n(\catC)$ denotes the equivariant dg category associated to the natural $S_n$ action on the pretriangulated hull of $\catC^{\otimes n}$.
In this paper we show that in fact, there is an isomorphism as above for any small dg category $\catC$, and moreover this isomorphism is natural in $\catC$. The proof of this has three steps. First, we have the following decomposition 
$$HH_\bullet(\Sym^n(\catC))\cong \oplus_{[\sigma]\in S_n/\sim} HH_\bullet(\catC^{\otimes n},\sigma)^{C(\sigma)}$$
where $S_n/\sim$ means conjugacy classes (see \cite{ville3}). After identifying conjugacy classes in $S_n$ with partitions $n=\lambda_1+...+\lambda_k$ we can choose the permutations to be of the form $$\sigma=(1\quad 2\quad \cdots \quad \lambda_1)(\lambda_1+1\quad \cdots \quad\lambda_1+\lambda_2)\cdots (\lambda_1+...+\lambda_{k-1}+1\quad\cdots \quad n).$$
The second step is then a version of the Künneth isomorphism
$$HH_\bullet(\catC^{\otimes n},\sigma)\cong HH_\bullet(\catC^{\otimes \lambda_1},\sigma_{\lambda_1})\otimes...\otimes HH_\bullet(\catC^{\otimes \lambda_k},\sigma_{\lambda_k})$$
where $\sigma_{\lambda_i}$ is a generator for the cyclic group of order $\lambda_i$. The final step is the computation that $HH_\bullet(\catC^{\otimes i},\sigma_i)\cong HH_\bullet(\catC)$ for any $i$ (see proposition \ref{keycomputation}) which implies that the above can be written as
$$HH_\bullet(\catC^{\otimes n},\sigma)\cong HH^\bullet(\catC^{\otimes \lambda_1},\sigma_{\lambda_1})\otimes...\otimes HH_\bullet(\catC^{\otimes \lambda_k},\sigma_{\lambda_k})\cong HH_\bullet(\catC)^{\otimes k}.$$
After taking invariants with respect to the $C(\sigma)-$action we obtain a summand of \\$\Sym^\bullet(\oplus_{i\geq 1}HH_\bullet(\catC)t^i)$. The proof then reduces to checking that when we vary $n$ and $\sigma\in S_n$ we get all of $\Sym^\bullet(\oplus_{i\geq 1}HH_\bullet(\catC)t^i)$.\\

After writing this paper we found out that Anno, Baranovsky and Logvinenko have also proved this conjecture. They provide explicit maps between standard complexes and show that they are homotopy equivalences. The formulas for the maps and the summary of the results appeared in \cite{Logvinenko-Gyenge} and the proofs are in \cite{AnnoBaranovskyLogvinenko}, along with the description of the Hopf algebra structure induced on the decomposition.

Gyenge, Koppensteiner and Logvinenko have previously also studied a similar decomposition from a $K$-theoretic perspective, see \cite{gyen-kopp-log} theorem C.

\subsection{Plan of the paper}
In section \ref{warmup} we carry out a warm up computation that we hope will facilitate reading the proof of the key computation proposition \ref{keycomputation}.

In sections \ref{dg categories and functors}-\ref{pretr} we recall the definitions and basic facts about dg categories and their modules.

In section \ref{Finite group actions on dg categories} we recall the notion of group actions on dg categories that is relevant to us.

In section \ref{hh} we recall the definition of and some basic facts about Hochschild homology.

In section \ref{hhforeq} we summarize the main result from \cite{ville3}.

In section \ref{main theorem} we prove our main result about the existence of a decomposition of the form \ref{iso} for any small dg category.
\subsection{Acknowledgments}
The author is grateful to Alexander Polishchuk, Greg Stevenson and Clas Löfwall for helpful discussions. The author was supported by the DFF (Independent Research Fund Denmark) grant no. 10.46540/4283-00116B.

\subsection{Conventions}
We work over a fixed field $k$ of characteristic zero.

Our conventions on matrices are that they act on the left. So whenever the source and target of some morphism are decomposed as direct sums, that morphism can be described as a matrix whose columns are indexed by the summands of the source and the rows are indexed by the summands of the target.

If $V$ is a graded vector space the free graded commutative algebra on $V$ can be described either as a subalgebra of the tensor algebra $T^\bullet(V)$ or as a quotient of it. To distinguish between these we will use $\Sym^\bullet(V)$ to denote the subalgebra of $T^\bullet(V)$ and $S^\bullet(V)$ for the quotient algebra.

\subsection{Warm up computation}\label{warmup}
The notation in the computation $HH_\bullet(\catC^{\otimes i},\sigma_i)\cong HH_\bullet(\catC)$ (which is proposition \ref{keycomputation}) gets a little complicated when working with categories and it becomes hard to see the actual reason why it works. Therefore, as a warm up, we prove this fact here in the special case that $\catC=A$ is an associative algebra and $i=2$.

So, let $A$ be an associative algebra. We write $A^e:=A\otimes A^{op}$ and will use lower case letters $f,g,..$ for elements in $A$ and primed letters $f',g'$ for elements in $A^{op}$. Let $A_2=A^{\otimes 2}$ and consider the ring automorphism $\sigma:A_2\to A_2$, $(a,b)\mapsto (b,a)$. Write $A_2^e=A_2\otimes A_2^{op}$. Then $A_2$ is naturally a left $A_2^e$ module via $(f_1,f_2,f_1',f_2')\cdot (a,b):=(f_1af_1',f_2bf_2')$. We have the following free resolution of the left $A_2^e$-module $A_2$ 
$$Bar(A_2):=[...\to A_2^{\otimes 3}\to A_2^{\otimes 2}]\to A_2.$$
Let $^\sigma A_2$ be the right $A_2^e$-module whose underlying vector space is $A_2$ and whose module structure is given by
$(a,b)\cdot (f_1,f_2,f_1',f_2'):=(f_2'af_1,f_1'bf_2)$. We have four ring maps $A^e\to A_2^e$ defined by
$$e_{11}:(f,g')\mapsto (f,1,g',1'), e_{12}:(f,g')\mapsto (f,1,1',g'), $$
$$e_{21}:(f,g')\mapsto (1,f,g',1'), e_{22}:(f,g')\mapsto (1,f,1',g').$$
If $M$ is a left $A_2^e$ module and $N$ is a right $A^e$ module we write $N\otimes_{A^e_{21}}M:=N\otimes_{A^e}e_{21}^*(M)$. 
In other words $N\otimes_{A^e_{21}}M$ is the cokernel of the map $N\otimes (A\otimes A^{op})\otimes M\to N\otimes M$, 
$$(n,f\otimes g',m)\mapsto (n(f\otimes g'),m)-(n,(1\otimes f\otimes g'\otimes 1')m).$$
Note that since elements in the image of $e_{21}$ commute with elements in the image of $e_{12}$ this tensor product inherits a $A^e$-module structure via $e_{12}$.
\begin{lemma}
    For any left $A_2^e$ module $M$ we have an isomorphism 
    $$^\sigma A_2\otimes_{A_2^e}M\cong A\otimes_{A^e_{12}} (A\otimes_{A^e_{21}}M).$$
\end{lemma}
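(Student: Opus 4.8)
The statement is an isomorphism of vector spaces (in fact of $A^e$-modules via $e_{12}$), so the natural strategy is to write down explicit mutually inverse linear maps and check they are well-defined on the relevant cokernels. First I would unwind all three tensor products as coequalizers. On the left, $^\sigma A_2\otimes_{A_2^e}M$ is the quotient of $A_2\otimes M$ by the relations coming from the $A_2^e$-action: $(a,b)(f_1,f_2,f_1',f_2')\otimes m \sim (a,b)\otimes (f_1,f_2,f_1',f_2')m$, i.e. $(f_2'af_1, f_1'bf_2)\otimes m \sim (a,b)\otimes (f_1,f_2,f_1',f_2')m$. On the right, $A\otimes_{A^e_{21}}M$ is the quotient of $A\otimes M$ by $x(f\otimes g')\otimes m \sim x\otimes e_{21}(f\otimes g')m = x\otimes(1,f,g',1')m$, i.e. $gxf\otimes m \sim x\otimes(1,f,g',1')m$; and then tensoring with $A$ over $A^e_{12}$ quotients $A\otimes(A\otimes_{A^e_{21}}M)$ by $y(f\otimes g')\otimes \xi \sim y\otimes e_{12}(f\otimes g')\xi = y\otimes(f,1,1',g')\xi$.

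**The maps.** I would define $\Phi\maps {}^\sigma A_2\otimes_{A_2^e}M \to A\otimes_{A^e_{12}}(A\otimes_{A^e_{21}}M)$ by $(a,b)\otimes m \mapsto a\otimes(b\otimes m)$, and $\Psi$ in the reverse direction by $y\otimes(x\otimes m)\mapsto (y,x)\otimes m$. The bulk of the proof is four well-definedness checks, all routine once the bookkeeping is set up: (i) $\Phi$ kills the $A_2^e$-relations on the source — one computes $\Phi((f_2'af_1,f_1'bf_2)\otimes m) = f_2'af_1\otimes(f_1'bf_2\otimes m)$ and must move $f_1, f_1'$ across the $e_{21}$-tensor and $f_2, f_2'$ across the $e_{12}$-tensor to land on $m$ as $(f_1,f_2,f_1',f_2')m$, using that $e_{12}$ and $e_{21}$ have commuting images so the two slides don't interfere; (ii)–(iii) $\Psi$ is well-defined on the inner $e_{21}$-coequalizer and then on the outer $e_{12}$-coequalizer, each a one-line manipulation of the module structures $e_{12}, e_{21}$ against the $A_2^e$-structure on $M$; and then $\Phi,\Psi$ are visibly inverse on representatives. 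One should also check $\Phi$ is $A^e$-linear for the $e_{12}$-structure on the target, which is immediate from the formula.

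**Where the friction is.** There is no deep obstacle here — the content is purely formal, an instance of iterated base change / the associativity of tensor products along a composite of ring maps. Conceptually one is just observing that $A_2 = A\otimes A$ with its $A_2^e$-structure, when restricted along $(e_{21}, e_{12})$, factors the ${}^\sigma A_2$-bimodule as an iterated relative tensor product; the twist by $\sigma$ is exactly what makes the ``$f_2'$ acts on the first factor, $f_1'$ acts on the second'' pattern in $^\sigma A_2$ match $e_{21}$ then $e_{12}$. The only thing that requires genuine care is tracking which of the four ring maps $e_{ij}$ absorbs which of the four inputs $(f_1,f_2,f_1',f_2')$, and keeping the order of the $\sigma$-twisted action straight; a sign-free but index-heavy computation. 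I would organize it by first checking that $\Phi$ and $\Psi$ are well-defined as maps out of the plain tensor products $A_2\otimes M$ and $A\otimes A\otimes M$ respectively, then descend each through its coequalizers in turn, so that each individual verification involves only one set of relations at a time.
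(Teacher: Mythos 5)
Your proposal is correct and is essentially the paper's proof in different packaging: the paper shows that the two projections out of $A\otimes A\otimes M$ (onto $^\sigma A_2\otimes_{A_2^e}M$ and onto $A\otimes_{A^e_{12}}(A\otimes_{A^e_{21}}M)$) have equal kernels, which is precisely your pair of well-definedness checks for $\Phi$ and $\Psi$ together with the observation that they are inverse on representatives. One small index slip in your sketch of check (i): it is $f_2$ and $f_1'$ that slide across the inner $e_{21}$-tensor and $f_1$ and $f_2'$ that slide across the outer $e_{12}$-tensor, not the pairing you wrote --- but this is exactly the bookkeeping you flagged and does not affect the argument.
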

\begin{proof}
    Consider the diagram
    $$\begin{tikzcd}
        (A\otimes A)\otimes M\arrow[two heads]{rr}{\pi}\arrow{d}{=}&&^\sigma A_2\otimes_{A_2^e}M \\
        A\otimes(A\otimes M)\arrow[two heads]{d}{p_1}&&\\
        A\otimes (A\otimes_{A_{21}^e} M)\arrow[two heads]{rr}{p_2}&&A\otimes_{A_{12}^e}(A\otimes_{A_{21}^e}M).
    \end{tikzcd}$$
We will show that the kernels of $\pi$ and $p_2p_1$ are the same. Suppose 
    $x\in\ker(\pi)$. Then $x$ can be written as a sum of elements of the form
    $$\delta((a,b)\otimes (f_1,f_2,f_1',f_2')\otimes m):=(f_2'af_1,f_1'bf_2)\otimes m-(a,b)\otimes (f_1\otimes f_2)m(f_1'\otimes f_2').$$
Such elements vanish under $p_2p_1$. For the other inclusion suppose $x\in \ker(p_2p_1)$. Let $\bar{x}=p_1(x)$ which is then in the kernel of $p_2$. Therefore $\bar{x}$ can be written as a sum of elements of the form
$$f'af\otimes (b\otimes_{A_{21}^e} m)-a\otimes (b\otimes_{A^e_{21}}(f\otimes 1)m(1\otimes f')).$$
Let $x'$ denote a lift of $\bar{x}$ which is the sum of elements of the form
$$f'af\otimes (b\otimes m)-a\otimes (b\otimes (f\otimes 1)m(1\otimes f')).$$
Then $\pi$ vanishes on $x'$. Also $p_1$ vanishes on $x-x'$ so $x-x'$ can be written as as sum of elements of the form
$$a\otimes (f'bf\otimes m-b\otimes (1\otimes f)m(f'\otimes 1))$$
and $\pi$ vanishes on such elements. Therefore $\pi(x)=0$.
\end{proof}
Now, $HH_\bullet(A_2,\sigma)$ is obtained by applying 
$^\sigma A_2\otimes_{A_2^e} (-)$ to the complex $Bar(A_2)$. By the lemma this is isomorphic to what we get by first applying $A\otimes_{A^e_{21}}(-)$ and then applying $A\otimes_{A_{12}^e}(-)$. But here is the key observation: $e_{21}^*(A_2^{\otimes n})$ is free as a $A^e$ module for all $n$ including $n=1$. (For $n=1$ the same is not true for $e_{11}$ or $e_{22}$!) Moreover for $n\geq 2$ $A\otimes_{A^e_{21}}A_2^{\otimes n}$ are free $A^e$-modules via $e_{12}$. Therefore $A\otimes_{A^e_{21}}Bar(A_2)$ is a free resolution of $A$ as a left $A^e$-module. After applying $A\otimes_{A^e_{12}}(-)$ we get $HH_\bullet(A)$. We have proved the following:
\begin{prop}
    Let $A$ be any associative algebra. Then $HH_\bullet(A_2,\sigma)\cong HH_\bullet(A)$.
\end{prop}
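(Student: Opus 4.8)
The plan is to use the Lemma to rewrite the complex computing $HH_\bullet(A_2,\sigma)$ and then recognize the result as a complex computing $HH_\bullet(A)$. By definition $HH_\bullet(A_2,\sigma)$ is the homology of the twisted complex ${}^\sigma A_2 \otimes_{A_2^e} Bar(A_2)$, obtained by applying ${}^\sigma A_2 \otimes_{A_2^e}(-)$ to the bar resolution of $A_2$ as a left $A_2^e$-module. Since the isomorphism of the Lemma is natural in $M$, applying it to each term of $Bar(A_2)$ is compatible with the differential, so this complex is isomorphic to $A \otimes_{A^e_{12}}\bigl(A \otimes_{A^e_{21}} Bar(A_2)\bigr)$. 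It therefore suffices to show that $A \otimes_{A^e_{21}} Bar(A_2)$, equipped with the left $A^e$-module structure coming from $e_{12}$, is a free resolution of $A$; then applying $A \otimes_{A^e_{12}}(-)$ to it computes $\mathrm{Tor}^{A^e}_\bullet(A,A) = HH_\bullet(A)$.

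To check that $A \otimes_{A^e_{21}} Bar(A_2)$ is a resolution of $A$, I would separate exactness from freeness. For exactness, the augmented bar complex $Bar(A_2)\to A_2$ carries the usual contracting homotopy $x_0\otimes\cdots\otimes x_n \mapsto 1\otimes x_0\otimes\cdots\otimes x_n$, which is $k$-linear; since $A\otimes_{A^e_{21}}(-)$ is a $k$-linear additive functor it preserves this contraction, so $A\otimes_{A^e_{21}} Bar(A_2) \to A\otimes_{A^e_{21}} A_2$ remains exact, and one checks directly that $A\otimes_{A^e_{21}} A_2 \cong A$ as left $A^e$-modules via $e_{12}$.

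The heart of the matter is the freeness, which rests on the key observation already highlighted in the warm-up. Restricted along $e_{21}$, the module $A_2 = A\otimes A$ carries the left $A$-action on the second tensor factor and the right $A$-action on the first, so $e_{21}^*(A_2)$ is the free $A$-bimodule of rank one — this is exactly where $e_{21}$ differs from $e_{11}$ and $e_{22}$. For $n\ge 2$ one has $A_2^{\otimes n}\cong A_2^e\otimes_k A_2^{\otimes(n-2)}$ as left $A_2^e$-modules, hence $e_{21}^*(A_2^{\otimes n})\cong e_{21}^*(A_2^e)\otimes_k A_2^{\otimes(n-2)}$, and after applying $A\otimes_{A^e_{21}}(-)$ the result is a free $A^e$-module via $e_{12}$ (the $e_{12}$-structure survives because the images of $e_{12}$ and $e_{21}$ commute in $A_2^e$). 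Thus all terms of $A\otimes_{A^e_{21}} Bar(A_2)$ in positive homological degree are free $A^e$-modules, and together with the exactness above this exhibits a free resolution of $A$ as a left $A^e$-module.

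Putting the pieces together, since Hochschild homology may be computed from any projective resolution,
$$HH_\bullet(A) \cong H_\bullet\bigl(A\otimes_{A^e_{12}}(A\otimes_{A^e_{21}} Bar(A_2))\bigr) \cong H_\bullet\bigl({}^\sigma A_2\otimes_{A_2^e} Bar(A_2)\bigr) = HH_\bullet(A_2,\sigma),$$
which is the claim. I expect the only genuine work to be the bookkeeping of module structures — verifying $A\otimes_{A^e_{21}} A_2\cong A$ as an $A^e$-module via $e_{12}$, and that the freeness isomorphisms for $n\ge 2$ are compatible with the $e_{12}$-action used in the subsequent tensor — rather than anything conceptually hard; the conceptual content is entirely the asymmetry that makes the ``crossed'' embedding $e_{21}$ behave well where the ``straight'' embeddings $e_{11},e_{22}$ do not.
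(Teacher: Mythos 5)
Your overall strategy is the same as the paper's: use the Lemma to rewrite ${}^\sigma A_2\otimes_{A_2^e}Bar(A_2)$ as $A\otimes_{A^e_{12}}\bigl(A\otimes_{A^e_{21}}Bar(A_2)\bigr)$, show the inner tensor product is a free resolution of $A$ over $A^e$ via $e_{12}$, and conclude. The freeness analysis is exactly the paper's key observation. There is, however, one step whose justification fails as written: you assert that $A\otimes_{A^e_{21}}(-)$ preserves the contracting homotopy $x\mapsto 1\otimes x$ ``because it is a $k$-linear additive functor.'' An additive functor on $A^e$-modules can only be applied to morphisms of $A^e$-modules, and this homotopy is not one: via $e_{21}$ the left $A$-action on $A_2^{\otimes n}$ is left multiplication by $1\otimes f$ on the first bar factor, and the homotopy inserts a new first factor, so $h\bigl((1\otimes f)\cdot x_0\otimes\cdots\bigr)=(1,1)\otimes(1,f)x_0\otimes\cdots$ while $(1\otimes f)\cdot h(x)=(1,f)\otimes x_0\otimes\cdots$. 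These are not identified after tensoring, so $\mathrm{id}_A\otimes h$ is not even well defined on $A\otimes_{A^e_{21}}(-)$; the other homotopy $x\mapsto x\otimes 1$ fails for the symmetric reason on the right action.

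The gap closes with an ingredient you already state. Since $e_{21}^*(A_2^{\otimes n})$ is free over $A^e$ for every $n\geq 1$ --- including $n=1$, which is exactly the point the paper flags with an exclamation mark --- the augmented complex $\cdots\to A_2^{\otimes 2}\to A_2\to 0$ is an exact, bounded-below complex of projective $A^e_{21}$-modules, hence contractible by an $A^e_{21}$-linear homotopy; equivalently, $Bar(A_2)\to A_2$ is a quasi-isomorphism between h-flat complexes of $A^e_{21}$-modules. Either formulation shows that $A\otimes_{A^e_{21}}Bar(A_2)\to A\otimes_{A^e_{21}}A_2\cong A$ remains exact. In your write-up the $n=1$ freeness is used only to identify the augmentation target; its real job is to guarantee exactness. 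With this repair your proof coincides with the paper's.
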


\section{Dg categories}
We recall some basic facts about dg categories. Nothing here is new but some proofs are included for convenience. 
\subsection{Dg categories and dg functors} \label{dg categories and functors}
A \textit{dg category} $\catC$ is a category enriched in $\Z$-graded chain complexes. Here are some basic examples which we will encounter a lot. More interesting examples will be treated later.

\begin{ex}\label{com} We denote by $\com(k)$ the category whose objects are $\Z$-graded chain complexes over $k$ and whose homomorphism complexes are defined as 
$$\Hom_{\com(k)}(V,W)^i=\prod_{j\in \Gamma}\Hom_k(V^j,W^{j+i})$$
and the differentials are given by 
$$\partial(f)=d_W\circ f-(-1)^{|f|}f\circ d_V.$$
\end{ex}
\begin{ex}
A $\Z$-graded dg algebra can be thought of as a dg category with one object.
\end{ex}

Given a dg category $\catC$ we can associate to it four new categories $Z^0(\catC)$, $H^0(\catC)$, $H^\bullet(\catC)$ and $\catC^\#$ where the last two are graded category. The objects of each of these are the same as the objects of $\catC$. The homomorphism spaces are obtained from those in $\catC$ by taking degree zero cycles, taking zero'th homology, taking the total homology and forgetting the differentials respectively.

Let $\catC$ and $\catC'$ be dg categories. A \textit{dg functor} $F:\catC\to \catC'$ is a functor between categories enriched in chain complexes. A dg functor $F:\catC\to \catC'$ induces functors $Z^0(F): Z^0(\catC)\to Z^0(\catC'),$ $H^0(F):H^0(\catC)\to H^0(\catC')$, $H^\bullet(F):H^\bullet(\catC)\to H^\bullet(\catC')$, $F^\#:\catC^\#\to \catC'^\#$. We say that $F$ is a \textit{quasi-equivalence} if $H^\bullet(F)$ is fully faithful and $H^0(F)$ is essentially surjective.

The collection of dg functors from $\catC$ to $\catC'$ form the objects of a new dg category which we will denote $\Func_{dg}(\catC,\catC')$. Given two dg functors $F,G:\catC\to \catC'$, a \textit{pre-natural transformation} $\alpha:F\to G$ of degree $i$ consists of a degree $i$ morphism $\alpha_x\in \catC'(F(x),G(x))^i$ for every $x\in \Ob(\catC)$ satisfying the usual naturality conditions. The homomorphism complex $\Hom_{\Func_{dg}}(F,G)$ is the graded vector space whose degree $i$ part consists of the pre-natural transformations of degree $i$. The differential is given pointwise: $\partial(\alpha)_x=\partial(\alpha_x)$ where on the right $\partial$ refers to the differential in the complex $\catC'(F(x),G(x))$. The \textit{natural transformations} are the degree zero cycles in $\Hom_{\Func_{dg}}(F,G)$. A natural transformation is called a natural isomorphism if $\alpha_x$ is an isomorphism in $Z^0(\catC')$ for all $x\in \Ob(\catC)$.

Finally we observe that if $\catC$ is a dg category, then there is a dg category $\catC^{op}$ with $\Ob(\catC^{op})=\Ob(\catC)$ and $\catC^{op}(c,c')=\catC(c',c)$. Composition is defined by $f\circ^{op}g:=(-1)^{|f||g|}g\circ f$.

\subsection{Dg modules}\label{dgmodules}
We let $mod^{dg}-\catC$ be the dg category $\Func_{dg}(\catC^{op},\com(k))$ and we refer to its objects as \textit{right $\catC$-modules}. Left modules are defined as the objects of $\Func_{dg}(\catC,\com(k))$. To simplify notation we will denote the homomorphism complexes between two right (or left) modules simply by $\Hom_\catC(M,N)$.

We note here that the category $Z^0(mod-\catC)$ is abelian and kernels and cokernels are computed pointwise in the abelian category $Z^0(\com(k))$. By a submodule $M\subset N$ we mean a monomorphism $M\to N$ in $Z^0(mod-\catC)$.

Any module $M\in mod-\catC$ gives rise in a natural way to an underlying graded module $M^\#:\catC^\#\to \text{grVect}$.

There is a dg functor $Y:\catC\to mod^{dg}-\catC$ which we refer to as the dg-Yoneda embedding. The dg version of the usual Yoneda lemma says that there is an isomorphism of chain complexes
$$\Hom_{\Func_{dg}}(Y(x),F)\cong F(x).$$
For $x\in \catC$ let $h^x:=Y(x)$. A \textit{free module} is one of the form $\bigoplus_{i\in I}h^{x_i}\otimes V_i$ where $V_i$ are chain complexes of vector spaces. If $I$ is finite and each $V_i$ is a finite dimensional chain complex then we say that the module is \textit{finitely generated free}. We say that a  module $M$ is \textit{semi-free} if it admits a filtration $$0=F_0\subset F_1\subset F_2\subset... \subset M$$
such that for all $i\geq 1$ the quotients $F_i/F_{i-1}$ are free and $M=\cup_{i\geq 0} F_i$. 

Similarly there is a contravariant Yoneda embedding $Y':\catC^{op}\to \catC-mod$. We denote by $h_x:=Y'(x)$. Now free modules and semi-free modules are defined analogously to the case of right modules.


For any dg category $\catC$ the category $H^0(mod^{dg}-\catC)$ can be given the structure of a triangulated category. Shifts are defined pointwise: given a right module $M$ we define $M[1](x):=M(x)[1]$. Cones of morphisms are also constructed pointwise: if $\alpha:M\to M'$ is a natural transformation we set $Cone(\alpha)(x):=Cone(\alpha_x)$. The module $Cone(\alpha)$ fits naturally into a diagram in $Z^0(\catC)$
$$M'\to Cone(\alpha)\to M[1]\to M'[1].$$
The distinguished triangles in $H^0(mod^{dg}-\catC)$ are those which are isomorphic to a sequence of the above form. 

A right $\catC$-module $M$ is called acyclic if it is acyclic pointwise. A natural transformation $\alpha: M\to N$ is quasi isomorphism if it is so pointwise. Equivalently, $\alpha$ is a quasi isomorphism if and only if $Cone(\alpha)$ is acyclic.

A module $P$ is called \textit{h-projective} if $\Hom_\catC(P,N)$ is acyclic for any acyclic module $N$.
\begin{prop}\label{adapted2}
(i) If $\catC$ is a small dg category then any right (or left) $\catC$-module $M$ admits a quasi isomorphism from a semi-free module.\\

(ii) Any semi-free module is h-projective.
\end{prop}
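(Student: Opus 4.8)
The two parts are essentially independent, and the plan is to prove (i) by a hands-on ``kill the homology'' construction and to reduce (ii) to the case of free modules by a telescope argument. Throughout I use that $k$ is a field and that $\catC$ is small (so that the index sets below are sets and the relevant direct sums are honest modules).

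For (i), I would build the semi-free module as a countable increasing union $P=\bigcup_{n\ge 0}P_n$ of submodules together with compatible chain maps $\phi_n\colon P_n\to M$, with $P_0=0$, with $P_n/P_{n-1}$ free for every $n\ge 1$, and with $\colim\phi_n$ a quasi-isomorphism. To start, take for each $x\in\Ob(\catC)$ and each homogeneous cocycle $m\in M(x)$ a suitably shifted copy of $h^x$, let $P_1$ be the direct sum of these, and let $\phi_1$ be the map which on the summand indexed by $m$ is the morphism $h^x\to M$ corresponding to $m$ under the dg Yoneda lemma; evaluating at $\id_x$ shows $\phi_1$ is surjective on cocycles in each degree at each object, so $H^\bullet(\phi_1)$ is surjective. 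Given $\phi_n$, choose a homogeneous basis of the graded module $\ker H^\bullet(\phi_n)$; each basis element is represented by a cocycle $z\in P_n(x)$ with $\phi_n(z)=\partial\mu$ for some $\mu\in M(x)$, and I form $P_{n+1}$ by adjoining to $P_n$ a free generator $g_z$ (a shift of $h^x$) for each such $z$, with $\partial g_z=z$ and $\phi_{n+1}(g_z):=\mu$. Then $P_n\subset P_{n+1}$ with free quotient, so $P=\bigcup P_n$ is semi-free. To see that $\phi:=\colim\phi_n$ is a quasi-isomorphism, use that cohomology commutes with the filtered colimit $P=\colim P_n$: surjectivity of $H^\bullet(\phi)$ is inherited from $H^\bullet(\phi_1)$, and for injectivity a class in $H^\bullet(P)$ killed by $H^\bullet(\phi)$ is the image of some $\alpha\in H^\bullet(P_n)$, which then lies in $\ker H^\bullet(\phi_n)$ and, by construction, already dies in $H^\bullet(P_{n+1})$, hence in $H^\bullet(P)$.

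For (ii), let $N$ be acyclic and let $P$ be semi-free with filtration $0=F_0\subset F_1\subset\cdots$, $P=\bigcup F_i$, each $F_i/F_{i-1}$ free. First I dispose of free modules: by the dg Yoneda lemma $\Hom_\catC(h^x,N)\cong N(x)$, and over a field an acyclic complex of vector spaces is contractible, so $\Hom_\catC(h^x\otimes V,N)\cong\Hom_k(V,N(x))$ is contractible for any complex $V$; since $\Hom_\catC(-,N)$ sends $\bigoplus$ to $\prod$ and a product of acyclic complexes is acyclic, $\Hom_\catC(F,N)$ is acyclic for every free $F$. Next, each inclusion $F_{i-1}\hookrightarrow F_i$ is split after forgetting differentials (the cokernel is free, hence projective as a graded module), so $\Hom_\catC(-,N)$ carries $0\to F_{i-1}\to F_i\to F_i/F_{i-1}\to 0$ to a short exact sequence of complexes; since the flanking terms have acyclic $\Hom$ into $N$, induction on the finite length of the filtration of each $F_i$ shows $\Hom_\catC(F_i,N)$ is acyclic for all $i$. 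Finally, the underlying graded module of $P$ is free (choose compatible splittings of the successive inclusions), so the telescope sequence $0\to\bigoplus_i F_i\xrightarrow{\,\id-\iota\,}\bigoplus_i F_i\to P\to 0$ (with $\iota$ shifting the $i$-th summand into the $(i+1)$-st) is split after forgetting differentials; applying $\Hom_\catC(-,N)$ gives a short exact sequence $0\to\Hom_\catC(P,N)\to\prod_i\Hom_\catC(F_i,N)\to\prod_i\Hom_\catC(F_i,N)\to 0$ whose middle term is acyclic, and the long exact sequence then forces $\Hom_\catC(P,N)$ to be acyclic, i.e.\ $P$ is h-projective.

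The one step where I expect genuine trouble is the passage from the finite stages of the filtration to all of $P$ in (ii): naively writing $\Hom_\catC(\colim F_i,N)=\lim_i\Hom_\catC(F_i,N)$ and taking cohomology would introduce a $\lim^1$-term, and the whole point of the telescope presentation, together with the observation that it splits after forgetting differentials, is to sidestep this. The only other point requiring attention is the bookkeeping in (i) — keeping track of precisely which cohomology classes have been killed at each stage — but that is purely organizational; everything else is routine.
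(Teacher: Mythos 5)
Your proof is correct. The paper itself offers no argument here --- it simply cites Theorem 3.1 of Keller's \emph{Deriving dg categories} --- so what you have written is essentially a self-contained reconstruction of the standard proof behind that citation: attaching free cells to kill cohomology for (i), and the free case plus the split telescope sequence $0\to\bigoplus_i F_i\to\bigoplus_i F_i\to P\to 0$ for (ii). Two small remarks. First, in (ii) your treatment of free modules genuinely uses that $k$ is a field: this paper's free modules are of the form $\bigoplus_i h^{x_i}\otimes V_i$ with the $V_i$ arbitrary complexes, and the acyclicity of $\Hom_k(V,N(x))$ for acyclic $N(x)$ rests on acyclic complexes of vector spaces being contractible (equivalently, one may split each $V_i$ as its cohomology plus a contractible summand and reduce to shifted representables, which is the form in which the statement holds over an arbitrary base). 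Second, in (i) you do not need a basis of $\ker H^\bullet(\phi_n)$ as a module over anything --- a spanning set of representing cocycles at each object and degree suffices --- but this does not affect the argument. Everything else, including the identification of the $\lim^1$ issue and its resolution via the graded splitting of the telescope, is exactly right.
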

\begin{proof}
This is theorem 3.1 in \cite{keller2}.
\end{proof}

If $\catC$ and $\catC'$ are two dg categories we define their tensor product $\catC\otimes\catC'$ to be the dg category whose objects are pairs $(c,c')$ with $c\in\Ob(\catC)$ and $c'\in \Ob(\catC')$. The homomorphism complexes are defined as
$$(\catC\otimes \catC')((c_1,c_1'),(c_2,c_2')):=\catC(c_1,c_2)\otimes \catC'(c_1',c_2').$$

The dg category of $\catC-\catC'$ \textit{bimodules} is by definition the dg category $\catC\otimes \catC'^{op}-mod$ (which is the same as $mod-\catC^{op}\otimes \catC'$) and its objects will be referred to as $\catC-\catC'$-\textit{bimodules}.

Now suppose that $\catC, \catC'$ and $\catC''$ are small dg categories. If $M$ is a $\catC-\catC'$-bimodule and $N$ is a $\catC'-\catC''$-bimodule we define their \textit{tensor product} $M\otimes_{\catC'}N$ to be the $\catC-\catC''$ bimodule which assigns to a pair $(c,c'')$ the chain complex which is the cokernel of the following map:
\begin{align*}\bigoplus_{c_1',c_2'\in \Ob(\catC')}M(c,c_2')\otimes \catC'(c_1',c_2')\otimes N(c_1',c'')\to& \bigoplus_{c'\in \Ob(\catC')}M(c,c')\otimes N(c',c'')\\
m\otimes f\otimes n\mapsto & mf\otimes n-m\otimes fn\end{align*}
Note that in the special case that $\catC=\catC''=k$ the tensor product $M\otimes_{\catC'}N$ is a $k-k$-bimodule, i.e. just a chain complex. 
We say that a $\catC-\catC'$ bimodule $M$ is \textit{free as $\catC$-module} if for each $c'\in \catC'$ we have $M(-,c')$ is free. We say that it is \textit{free as a $\catC'$} module if for all $c\in \catC$ $M(c,-)$ is free. Similarly, we say that $M$ is \textit{semi-free as a left $\catC$-module} if $M(-,c')$ is semi-free for all $c'\in\catC'$ and we say that it is \textit{semi-free as a right $\catC'$-module} if for all $c\in\catC$ $M(c,-)$ is semi-free.
\begin{lemma}\label{lemmaontensoringfree}
    Let $M\overset{\sim}{\to} N$ be a quasi isomorphism between $\catC-\catC'$-bimodules and assume that $M$ and $N$ are both semi-free as left $\catC$-modules. Then for any right $\catC$-module $L$ the morphism of right $\catC'$-modules
    $$L\otimes_\catC M\to L\otimes_\catC N$$
    is a quasi isomorphism.
\end{lemma}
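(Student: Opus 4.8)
The plan is to reduce the statement to the case where $M$ and $N$ are free, and then to the case of a single representable generator, where one can compute directly. First I would observe that the cone of $M \to N$ is acyclic (being a pointwise quasi-isomorphism), so it suffices to show: if $P$ is a $\catC$-$\catC'$-bimodule which is acyclic and semi-free as a left $\catC$-module, then $L \otimes_\catC P$ is acyclic for every right $\catC$-module $L$. Actually, to be careful about the semi-freeness hypothesis being inherited by the cone, I would instead argue as follows. Both $M$ and $N$ are semi-free as left $\catC$-modules, so the filtrations $F_\bullet M$ and $F_\bullet N$ give, after tensoring with $L$, exhaustive filtrations of $L \otimes_\catC M$ and $L \otimes_\catC N$ whose associated graded pieces are $L \otimes_\catC (F_i/F_{i-1})$ with $F_i/F_{i-1}$ free as a left $\catC$-module. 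So the key computation is: \emph{for a left-free $\catC$-$\catC'$-bimodule, tensoring on the left over $\catC$ with any $L$ sends pointwise quasi-isomorphisms to pointwise quasi-isomorphisms} — but we need the map $M \to N$ to be compatible with the filtrations, which a general quasi-isomorphism need not be.

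To get around this, I would use the h-projectivity available from Proposition \ref{adapted2}(ii). The cleanest route: semi-free modules are h-projective, so a quasi-isomorphism between semi-free modules is a homotopy equivalence (a quasi-isomorphism in $H^0$ between h-projective objects is an isomorphism in the homotopy category). Concretely, since $M$ and $N$ are semi-free as left $\catC$-modules and $M \to N$ is a pointwise quasi-isomorphism, for each fixed $c' \in \catC'$ the map $M(-,c') \to N(-,c')$ is a quasi-isomorphism of semi-free, hence h-projective, right $\catC^{op}$-modules (equivalently left $\catC$-modules), and is therefore a homotopy equivalence of left $\catC$-modules. One must check that these pointwise homotopy inverses and homotopies assemble into bimodule morphisms/homotopies — this is where a little care is needed, but it follows from the fact that the homotopy category of semi-free bimodules that are left-semi-free maps faithfully, via the forgetful functor evaluating at each $c'$, and more robustly from the standard fact that $L\otimes_\catC(-)$ is a dg functor, hence preserves homotopy equivalences. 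So I would prove: $M \to N$ is a homotopy equivalence of $\catC$-$\catC'$-bimodules (using that, as objects of $\Func_{dg}(\catC^{op}, \text{left }\catC\text{-mod})$ or similar, both are built from representables and the map is a pointwise quasi-iso between h-projectives), and then apply the dg functor $L \otimes_\catC (-)$, which automatically preserves the resulting homotopy equivalence, yielding a homotopy equivalence and in particular a quasi-isomorphism $L \otimes_\catC M \to L \otimes_\catC N$.

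The main obstacle, then, is the bookkeeping in the first part: upgrading the \emph{pointwise} (in $\catC'$) homotopy equivalences of left $\catC$-modules to an honest homotopy equivalence of $\catC$-$\catC'$-bimodules. The safest formulation is to note that a $\catC$-$\catC'$-bimodule which is semi-free as a left $\catC$-module is h-projective as a bimodule relative to acyclic-as-left-$\catC$-modules, or simply to invoke that semi-freeness of $M$ and $N$ as bimodules can be arranged compatibly with being left-semi-free (one builds the bimodule semi-free filtration out of $h^{c}\otimes h_{c'}\otimes V$ generators, which are simultaneously left-free), so that $\Hom$ out of them computes derived Hom and a pointwise quasi-iso is a homotopy equivalence; tensoring is then harmless. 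I expect the verification that the cone's semi-freeness (as a left module) lets us conclude — i.e. that $L\otimes_\catC(-)$ applied to an acyclic left-semi-free bimodule is acyclic — to be the one genuinely technical point, handled by the filtration argument above reducing to $L \otimes_\catC h^{c} \cong L(c)$ for representable generators, where acyclicity is immediate.
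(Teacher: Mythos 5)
Your core idea is sound and genuinely different from the paper's, but as written the argument routes through a claim that is false in general. You propose to prove that $M\to N$ is a homotopy equivalence of $\catC$-$\catC'$-bimodules. The hypothesis is only that $M$ and $N$ are semi-free as \emph{left} $\catC$-modules, i.e.\ that each $M(-,c')$ is semi-free; this does not make them h-projective (or semi-free) as bimodules, and the suggested remedy that ``semi-freeness of $M$ and $N$ as bimodules can be arranged'' is not available, since $M$ and $N$ are given, not chosen. Concretely, take $\catC=k$: by the paper's definition of free module, every $\catC$-$\catC'$-bimodule is then free as a left $k$-module, but a quasi-isomorphism between arbitrary $\catC'$-modules is of course not in general a homotopy equivalence. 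So the ``assembly'' step you flag as the main obstacle does not merely require care; it genuinely fails.

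The good news is that the obstacle is a mirage: the conclusion of the lemma is that $L\otimes_\catC M\to L\otimes_\catC N$ is a quasi-isomorphism of right $\catC'$-modules, which is by definition a pointwise condition in $c'$, and $(L\otimes_\catC M)(c')=L\otimes_\catC M(-,c')$. So no bimodule-level homotopy equivalence is ever needed: fix $c'$, note that $M(-,c')\to N(-,c')$ is a quasi-isomorphism between semi-free, hence h-projective, left $\catC$-modules (Proposition \ref{adapted2}(ii)) and is therefore a homotopy equivalence, then apply the dg functor $L\otimes_\catC(-)$, which preserves homotopy equivalences. That is a complete proof, and it is already contained in the first half of your second paragraph. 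The paper makes the same pointwise reduction (to $\catC'=k$) but then argues differently: it invokes the h-flatness of semi-free modules (\cite{keller2}, Lemma 6.1), chooses a semi-free replacement $B\overset{\sim}{\to}L$, and concludes by two-out-of-three in the square comparing $B\otimes_\catC M\to B\otimes_\catC N$ with $L\otimes_\catC M\to L\otimes_\catC N$. Your route via h-projectivity, once stated pointwise, avoids both the h-flatness input and the replacement of $L$.
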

\begin{proof}
    Since being a quasi isomorphism is a pointwise property we may assume that $\catC'=k$ and then $M$ and $N$ are semi-free. Then the statement follows from the fact that semi-free modules are h-flat, meaning tensoring with them preserves acyclicity or equivalently preserve quasi isomorphisms (see \cite{keller2}, lemma 6.1). Indeed, if we choose a semi-free replacement $B\overset{\sim}{\to} L$ then the following commutative diagram completes the proof
    $$\begin{tikzcd}
        B\otimes_\catC M\arrow{r}{\sim}\arrow{d}{\sim}&B\otimes_\catC N\arrow{d}{\sim}\\
        L\otimes_\catC M\arrow{r}{}&L\otimes_\catC N.
    \end{tikzcd}$$
\end{proof}

If $F:\catC\to \catD$ is a dg functor we get induced functors $F^*:mod-\catD\to mod-\catC$ and $F_*:mod-\catC\to mod-\catD$ (and similarly for left modules). The functor $F_*$ is defined by 
$$F_*(M)(d)=M\otimes_\catC F^*(\catD(d,-)).$$

\begin{prop}\label{proponoriginaltensorproduct}
Let $M\in mod-\catC$ and $N\in \catC-mod$. If $M=h^c$ is free then $M\otimes_\catC N\cong N(c)$ and if $N=h_c$ then $M\otimes_\catC N\cong M(c)$.\\







\end{prop}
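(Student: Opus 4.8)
\section*{Proof proposal}

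The plan is to write down explicit mutually inverse chain maps in each case and check they descend to the cokernel defining $M\otimes_\catC N$. Recall that the dg-Yoneda lemma gives $h^c(x)=\catC(x,c)$ and $h_c(x)=\catC(c,x)$, and that by definition $M\otimes_\catC N$ is the cokernel in $\com(k)$ of the map $\bigoplus_{c_1,c_2}M(c_2)\otimes\catC(c_1,c_2)\otimes N(c_1)\to\bigoplus_{c'}M(c')\otimes N(c')$ sending $m\otimes f\otimes n$ to $mf\otimes n-m\otimes fn$.

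For the first claim take $M=h^c$. I would define a degree-zero map $\phi\colon h^c\otimes_\catC N\to N(c)$ out of the cokernel by sending the class of $f\otimes n$ in the summand $h^c(c')\otimes N(c')=\catC(c',c)\otimes N(c')$ to $f\cdot n=N(f)(n)\in N(c)$, i.e.\ $\phi$ is induced by the left $\catC$-action on $N$. That this is well defined on the cokernel is precisely associativity of the module action: for $m\in\catC(c_2,c)$, $f\in\catC(c_1,c_2)$, $n\in N(c_1)$ one has $(mf)\cdot n=m\cdot(f\cdot n)$. Conversely, define $\psi\colon N(c)\to h^c\otimes_\catC N$ by $n\mapsto[\id_c\otimes n]$, landing in the $c'=c$ summand; since $\id_c$ is a closed degree-zero morphism this is a chain map and introduces no Koszul sign. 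Then $\phi\psi=\id$ because $N(\id_c)=\id_{N(c)}$, and $\psi\phi=\id$ because for $f\in\catC(c',c)$ and $n\in N(c')$ the element $\id_c f\otimes n-\id_c\otimes fn=f\otimes n-\id_c\otimes(f\cdot n)$ lies in the image of the relation map, so $[f\otimes n]=[\id_c\otimes(f\cdot n)]=\psi\phi([f\otimes n])$.

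The second claim is the same argument with the two variables exchanged. For $N=h_c$, with $h_c(c')=\catC(c,c')$, define $M\otimes_\catC h_c\to M(c)$ on the summand $M(c')\otimes\catC(c,c')$ by the right $\catC$-action $m\otimes g\mapsto mg\in M(c)$, and its inverse $M(c)\to M\otimes_\catC h_c$ by $m\mapsto[m\otimes\id_c]$. Well-definedness is associativity of the right action, the two compositions are identities by the unit axiom, and the one nontrivial identification again uses the defining relation with the last factor equal to $\id_c$. Both isomorphisms are visibly natural in $N$ (resp.\ $M$) and in $c$, should that be wanted downstream.

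There is no genuine obstacle here: the only things requiring care are bookkeeping which structure map (left action of $N$, right action of $M$, composition in $\catC$) is invoked at each step, and the remark that inserting $\id_c$ preserves differentials and produces no sign because $\id_c$ is a closed degree-zero morphism. One could instead phrase $\phi$ abstractly as the canonical map out of the coequalizer, observing that the action map $\bigoplus_{c'}\catC(c',c)\otimes N(c')\to N(c)$ already coequalizes the two structure maps and is split by $n\mapsto\id_c\otimes n$, but the explicit description above is the most economical.
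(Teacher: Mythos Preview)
Your proof is correct and follows exactly the same approach as the paper: define the map $h^c\otimes_\catC N\to N(c)$ by $f\otimes n\mapsto N(f)(n)$ and exhibit its inverse as $n\mapsto[\id_c\otimes n]$. You are more explicit than the paper about why the map descends to the cokernel and why $\psi\phi=\id$, but the argument is the same.
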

\begin{proof}
Define a map 
$$\bigoplus_{x\in \catC}\catC(x,c)\otimes N(x)\to N(c)$$
by $f\otimes n\mapsto fn$ where $fn$ is short hand for $N(f)(n)$. This map descends to a map $h^c\otimes_\catC N\to N(c)$ which is an isomorphism with inverse given by $N(c)\ni n\mapsto [\id_c\otimes n]$. The case when $N=h_c$ is identical.

\end{proof}

\subsection{Pre-triangulated dg categories}\label{pretr}
Given a morphism $\alpha:x\to y$ in $Z^0(\catC)$ the cone of $\alpha$ refers to any object $z\in \catC$ such that $h^z\cong C(Y(\alpha))$ in $Z^0(mod-\catC)$. If the cone of $\alpha$ exists it is unique up to isomorphism. Similarly for $n\in \Z$ and $x\in \catC$ we say that $z$ is the shift of $x$ and we write $z=x[n]$ if $h^z\cong h^x[n]$ in $mod-\catC$. We say that the dg category $\catC$ is \textit{pre-triangulated} if it has all shifts and all cones. If $\catC$ is any dg category one can form a new dg category $\widehat{\catC}$ by adding formally all shifts and cones. The category $\widehat{\catC}$ is called the pretriangulated hull of $\catC$ and it comes with a fully faithful dg functor $\catC\to \widehat{\catC}$ which is universal among functors from $\catC$ to pretriangulated dg categories. See for example \cite{keller3} section 4.5. for further details.

\subsection{Finite group actions on dg categories} \label{Finite group actions on dg categories}
We will be interested in symmetric powers of dg categories. To define this we first recall the more general notion of finite group actions on dg categories and equivariant objects. When talking about an action of a finite group $G$ on a dg category $\catC$ there are different levels of strictness that one can require. The following definition suffices for the purpose of this paper.

\begin{deff}\label{groupaction}
An action of $G$ on $\catC$ is the data of an autoequivalence $\rho_g:\catC\overset{\sim}{\to} \catC$ for every $g\in G$, natural isomorphisms $\theta_{g,g'}:\rho_{g}\circ \rho_{g'}\overset{\sim}{\implies}\rho_{g'g}$ and $\eta:\rho_e\overset{\sim}{\implies} \id_\catC$ and these should satisfy the following two conditions.\\

i) For each $g\in G$ and $c\in \catC$ we have
$$(\theta_{e,g})_c=\eta_{\rho_g(c)}:\rho_e\rho_g(c)\to \rho_g(c)$$
and  
$$(\theta_{g,e})_c=\rho_g(\eta_{c}):\rho_g\rho_e(c)\to \rho_g(c).$$

ii) For all $g,h,k\in G$ we have commutative diagrams
$$\begin{tikzcd}\rho_g\rho_h\rho_k(c)\arrow{r}{\rho_g((\theta_{h,k})_c)}\arrow{d}{(\theta_{g,h})_{\rho_k(c)}}&\rho_{g}\rho_{kh}(c)\arrow{d}{(\theta_{g,kh})_c}\\
\rho_{hg}\rho_k(c)\arrow{r}{(\theta_{hg,k})_c} &\rho_{khg}(c)	
\end{tikzcd}
$$
\end{deff}
\begin{ex}\label{symmetricgroupaction}
Let $\catC$ be a dg category. Then the symmetric group $S_n$ acts on the tensor power $\catC^{\otimes n}$ in the above sense. In fact, this action is strict in an even stronger sense; all the higher categorical data is trivial $\theta=\id$, $\eta=\id$.
\end{ex}

Whenever we have a group action as above we can consider the category of $G$-equivariant objects in $\catC$. 
\begin{deff}\label{equivariantobject}
	Let $(\rho,\theta,\eta)$ be an action of a finite group $G$ on a small dg category $\catC$. The dg category of equivariant objects, denoted $\catC^G$ has objects 
	$$(c,(\alpha_g:c\overset{\sim}{\to}\rho_g(c))_{g\in G})$$
	where the $\alpha_g$ are isomorphisms that are required to satisfy the following condition. For any $c\in \catC$ and $g,h\in G$ the following diagram
	$$\begin{tikzcd}
		c\arrow{r}{\alpha_g}\arrow{d}{\alpha_{hg}}&\rho_g(c)\arrow{d}{\rho_g(\alpha_h)}\\
		\rho_{hg(c)}\arrow{r}{(\theta_{g,h}^{-1})_c}&\rho_g\rho_h(c)
	\end{tikzcd}$$
	commutes.
	
	A degree $n$ morphism between equivariant objects $\varphi:(c,\alpha)\to (c',\alpha')$ is a degree $n$ morphism in $\catC$ such that 
	$$\begin{tikzcd}
		c\arrow{r}{\varphi}\arrow{d}{\alpha_g}&c'\arrow{d}{\alpha_g'}\\
		\rho_g(c)\arrow{r}{\rho_g(\varphi)}&\rho_g(c').
	\end{tikzcd}$$
\end{deff}

If $H\subset G$ is a subgroup of $G$ and $\catC$ has finite direct sums, then we can define restriction and induction functors 
$$\Res^G_H:\catC^G\to \catC^H, \ \Ind_H^G:\catC^H\to \catC^G.$$
The restriction functor just forgets part of the equivariant structure. The induction functor is slightly more involved and we will describe it explicitly. Let $\{r_1,...,r_n\}$ be complete set of representatives for $H\setminus G$ so that any $g\in G$ can be uniquely written as $h\cdot r_i$ with $h\in H$ and $1\leq i\leq n$. The right action of $G$ on $H\setminus G$ gives rise to a right action of $G$ on $\{1,2,...,n\}$. Then we define
$$\Ind_H^G(c,\alpha):=\Big(\bigoplus_{i=1}^n\rho_{r_i}(c),\tilde{\alpha}\Big)$$
where the equivariant structure is defined as follows: For $g\in G$ $r_ig=h_ir_{g(i)}$. Then $\tilde{\alpha}_g$ is defined as the composite
$$\begin{tikzcd}\bigoplus_{r=1}^n\rho_{r_i}(c)\arrow{rr}{\oplus \rho_{r_i}(\alpha_{g^{-1}(i)})}&&	\bigoplus_{i=1}^n \rho_{r_{i}}\rho_{h_{g^{-1}(i)}}(c)\cong \bigoplus_{i=1}^n\rho_g\rho_{r_{g^{-1}(i)}}(c)\arrow{r}{}&\bigoplus_{i=1}^n\rho_g\rho_{r_i}(c)
\end{tikzcd}
$$
where the last map is given by the permutation matrix $(\delta_{i=g^{-1}(j)})_{ij}$.

\subsection{Hochschild homology}\label{hh}
Given a dg category $\catC$ we have the left and right \textit{diagonal modules} over $\catC^e:=\catC\otimes\catC^{op}$ which we denote by $\Delta^l_\catC$ and $\Delta^r_\catC$. They are defined by
$\Delta^l_\catC(c_1,c_2):=\catC(c_2,c_1)$ and $\Delta^r_\catC(c_1,c_2):=\catC(c_1,c_2)$. For any $\catC\otimes \catC$ bimodule $M$ we define the \textit{Hochschild homology} of $\catC$ with values in $M$ as the derived tensor product $HH_\bullet(\catC,M):=M\overset{L}{\otimes_{\catC^e} }\Delta^l_\catC$. This derived tensor product can be computed by replacing $\Delta_\catC^l$ by a semi-free (or more generally an h-projective) left $\catC\otimes\catC^{op}$ module $P$ and then taking the usual tensor product $M\otimes_{\catC^e}P$. 

We now describe a general method to replace a $\catC$-module by a semi-free module. First, a \textit{dg-resolution} of $M$ is a sequence in $Z^0(\catC-mod)$ 
$$...\to F^{-2}\to F^{-1}\to F^0\to M\to 0$$
such that for all $c\in\catC$ the following three sequences are exact
\begin{align*}
    1)& ...\to F^{-2}(c)\to F^{-1}(c)\to F^0(c)\to M(c)\to 0\\
    2)& ...\to Z^\bullet (F^{-2}(c))\to Z^\bullet (F^{-1}(c))\to Z^\bullet (F^0(c))\to Z^\bullet (M(c))\to 0\\
    3)& ...\to H^\bullet(F^{-2}(c))\to H^\bullet(F^{-1}(c))\to H^\bullet(F^0(c))\to H^\bullet(M(c))\to 0.
\end{align*}
These three sequences being exact imply that the map $F^0\to M$ induces a quasi isomorphism $Tot^{\oplus }(F^\bullet)\overset{\sim}{\to} M$. Moreover, if all the $F^i$'s were free modules then $Tot^{\oplus }(F^\bullet)$ will be semi-free. Any module $M$ admits a free resolution as above (see for example section 2.1 in \cite{pol-pos}). 

We will only need the following standard resolution of the diagonal bimodule $\Delta_\catC^l$.
First we define 
$$\text{Bar}^n(\catC):=\Big((x,y)\mapsto\bigoplus_{c_0,...,c_{n}\in \catC}\catC(c_0,x)\otimes\catC(c_1,c_0)\otimes...\otimes \catC(y,c_{n})\Big)$$
for $n\geq 1$.
For each $n\geq 1$ we have the map of left $\catC\otimes \catC^{op}$ modules $d^n:\text{Bar}^n(\catC)\to \text{Bar}^{n-1}(\catC)$ defined by
$$f_0|f_1|\cdots|f_{n+1}\mapsto \sum_{i=0}^{n+1}(-1)^{i}f_0|\cdots |f_if_{i+1}|\cdots |f_{n+1}$$
and we have the map $d^0:\text{Bar}^0(\catC)\to \Delta_{\catC}^l$ defined by $f_0|f_1\mapsto f_0f_1$. We set $\text{Bar}(\catC):=Tot^{\oplus}(\text{Bar}^\bullet(\catC))$. The complex 
\begin{align}\label{bar}...\to \text{Bar}^2(\catC)\to \text{Bar}^1(\catC)\to \text{Bar}^0(\catC)\to \Delta_\catC^l\end{align}
is pointwise contractible via the contracting homotopy
$$f_0|\cdots|f_{n+1}\mapsto \id_x|f_0|\cdots |f_{n+1}.$$
Because of this it follows that the sequence (\ref{bar}) is in fact a dg-resolution of $\Delta_\catC^l$. More over each $\text{Bar}^n(\catC)$ is free as a $\catC\otimes \catC^{op}$-module.
Let us define $\text{Bar}(\catC):=Tot^\oplus (Bar^\bullet(\catC))$ which is a semi-free replacement of $\Delta_\catC^l$. 

If we use this particular semi-free resolution of $\Delta_\catC^{l}$ to compute the derived tensor product we obtain the \textit{standard complex} which is a particular representative of the object $HH_\bullet(\catC,M)$ in $D(k)$. Below we describe the standard complex in more detail.

\begin{deff}
	Let $\catC$ be a small dg category and let $M$ be a $\catC-\catC$-bimodule. Set 
	$$C_n(\catC,M):=\bigoplus_{c_0,...,c_n\in \catC}M(c_1,c_0)\otimes \catC(c_2,c_1)\otimes...\otimes \catC(c_0,c_n).$$
	Note that each summand above is a chain complex whose differential, denoted $d_1$, is obtained from the differentials in $\catC$ and the differential on the bimodule $M$. 
	We will use the bar notation for elements in the standard complex; to shorten notation we will denote a pure tensor $$a_0\otimes a_1\otimes\cdots\otimes a_n\in C_n(\catC,F)$$
by $a_0[a_1|\cdots |a_n]$.
	There is a second differential $d_{2}:C_{n}(\catC,F)\to C_{n-1}(\catC,F)$ defined by 
	\begin{align*}d_2\big(m[a_1|\cdots |a_n]\big)&=ma_1[a_2|\cdots|a_n]\\
	&+\sum_{i=1}^{n-1}(-1)^{i}m[\cdots|a_ia_{i+1}|\cdots]\\
	&+(-1)^{n+|a_n|(|a_0|+...+|a_{n-1}|)}a_nm[a_1|\cdots|a_{n-1}].
\end{align*}
The differentials $d_1$ and $d_2$ commute and therefore we get a total complex (cohomologically graded) by altering the sign of $d_1$ by $(-1)^n$ on $C_n(\catC,M)$ whose $k'$th term for $k\in \Z$ is
$$\bigoplus_{n-m=k}C_m(\catC,M)^n.$$
\end{deff}
\begin{rmk}
Different sign conventions appear for the Hochschild differential but they all give rise to isomorphic chain complexes and so we use this one as it is the least complicated. This sign convention appears for example in \cite{Lod}.
\end{rmk}

Hochschild homology with coefficients in an endofunctor (or bimodule more generally) have nice functorial properties as is explained in \cite{Pol}. We will use the notation from \cite{ville3} where this kind of functoriality is only described in terms of standard complexes.
Let $dg-cat^+$ denote the category whose objects are pairs $(\catC,F)$ where $\catC$ is a small dg category and $F:\catC\to \catC$ is an endo-functor and whose morphisms $(\catC,F)\to (\catC',F')$ consist of pairs $(\phi,\alpha)$ where $\phi:\catC\to \catC'$ is a dg functor and $\alpha:\phi F\implies F'\phi$ is a natural transformation. Composition is defined by $$(\phi_1,\alpha_1)\circ(\phi_2,\alpha_2):=(\phi_1\phi_2,\alpha_1\star\alpha_2)$$
where $\alpha_1\star\alpha_2:=(\alpha_1)_{\phi_2}\phi_1(\alpha_2)$. A pair $(\phi,\alpha)$ as above gives rise to a chain map between standard complexes $(\phi,\alpha)_*:C_\bullet(\catC,F)\to C_\bullet(\catC',F')$ and this makes $C_\bullet(-)$ and consequently $HH_\bullet(-)$ into functors $dg-cat^+\to D(k)$.

\begin{lemma}\label{trivialaction}
Let $F:\catC\to \catC$ be an endofunctor on a small dg category. Then $(F,\id)_*:HH_\bullet(\catC,F)\to HH_\bullet(\catC,F)$ is the identity. 
\end{lemma}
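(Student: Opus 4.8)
The plan is to unwind $(F,\id)_*$ on the standard complex and then exhibit an explicit chain homotopy to the identity. Tracing through the definition of $(\phi,\alpha)_*$ in the case $(\phi,\alpha)=(F,\id)$, one sees that $(F,\id)_*\colon C_\bullet(\catC,F)\to C_\bullet(\catC,F)$ sends a generator $a_0[a_1|\cdots|a_n]$ to $F(a_0)[F(a_1)|\cdots|F(a_n)]$: it simply applies the dg functor $F$ to every tensor factor, and hence carries the summand indexed by $(c_0,\dots,c_n)$ into the one indexed by $(F(c_0),\dots,F(c_n))$. So it is by no means the identity chain map, and the lemma asserts that it is chain homotopic to $\id_{C_\bullet(\catC,F)}$. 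It commutes with the internal differential $d_1$ because $F$ is a dg functor, and a short computation shows it commutes with the Hochschild differential $d_2$.

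The homotopy I would use is the dg-categorical upgrade of the classical chain homotopy witnessing that an inner automorphism acts trivially on group homology. Picture $a_0[a_1|\cdots|a_n]$ as a ``twisted loop'' $c_0\xrightarrow{a_n}c_n\to\cdots\to c_1\xrightarrow{a_0}F(c_0)$, in which the coefficient edge $a_0$ is precisely where the loop crosses from $\catC$ into its $F$-twisted region. For $0\le i\le n$, cyclically re-base this loop at $c_i$, apply $F$ to the legs that thereby fall past the coefficient edge into the twisted region, and insert $\id_{F(c_i)}$ as the new coefficient; summing these with suitable signs gives
\[
h\big(a_0[a_1|\cdots|a_n]\big)=-\,\id_{F(c_0)}[a_0|a_1|\cdots|a_n]\;+\;\sum_{i=1}^{n}(-1)^{i+1}\,\id_{F(c_i)}\,[F(a_i)|\cdots|F(a_n)|a_0|a_1|\cdots|a_{i-1}]
\]
(up to Koszul signs when the $a_j$ carry nonzero degree). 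Since this is written purely in terms of the $a_j$, identity morphisms and $F$, it visibly defines a map $C_n(\catC,F)\to C_{n+1}(\catC,F)$ — there is no need to lift anything to $\mathrm{Bar}(\catC)$ — and $d_1h+hd_1=0$ because $F$ is a dg functor. I have checked the cases $n=0$ and $n=1$ by hand: there $d_2h$ and $hd_2$ telescope, the ``identity-coefficient'' chains cancelling in pairs and leaving exactly $F(a_0)[F(a_1)|\cdots|F(a_n)]-a_0[a_1|\cdots|a_n]$, so $d_2h+hd_2=(F,\id)_*-\id$. The general case is the same telescoping.

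Conceptually the lemma is cyclic invariance of the trace: for dg functors $\phi\colon\catD\to\catC$ and $G\colon\catC\to\catD$ there is a natural isomorphism $HH_\bullet(\catD,G\circ\phi)\cong HH_\bullet(\catC,\phi\circ G)$, and $(F,\id)_*$ is exactly this isomorphism for $\catD=\catC$, $G=F$, $\phi=\id_\catC$, which is the identity because one of the two composed functors is $\id_\catC$. The point worth stressing is that the coefficients are those of the \emph{same} functor $F$ that plays the role of $\phi$: for a morphism $(F,\alpha)\colon(\catC,G)\to(\catC,G)$ with an unrelated $G$ the induced endomorphism of $HH_\bullet(\catC,G)$ need not be the identity, so this matching is essential, and it is precisely what makes the homotopy above work. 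One could instead derive the lemma from cyclic invariance, but that statement is not set up in the form needed here in the sections recalled above, so I would give the explicit $h$.

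The step I expect to be the main obstacle is getting $h$ and all of its signs exactly right. The Hochschild differential carries the delicate wrap-around sign $(-1)^{n+|a_n|(|a_0|+\cdots+|a_{n-1}|)}$, and in $h$ one produces further Koszul signs each time factors are permuted past $a_0$ or past one another; arranging the signs so that the telescoping in $d_2h+hd_2$ collapses to precisely $(F,\id)_*-\id$ — rather than its negative, or with spurious surviving terms — is the one genuinely bookkeeping-heavy point. Compatibility with $d_1$, and well-definedness of $h$ on the standard complex, are by contrast immediate.
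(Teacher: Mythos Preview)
Your proposal is correct and takes essentially the same approach as the paper: both exhibit the same explicit chain homotopy on the standard complex, your $h$ being $-H$ in the paper's notation (the paper writes $[a_0|\cdots|a_k]$ for what you write as $\id_{F(c_0)}[a_0|\cdots|a_k]$), with the sign bookkeeping correctly identified as the only real verification. Your conceptual remark via cyclic invariance of the trace is a pleasant supplement not present in the paper.
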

\begin{proof}
Consider the homotopy $H:C_\bullet(\catC,F)\to C_\bullet(\catC,F)$ defined by 
\begin{align*}
    a_0[a_1|\cdots |a_k]\mapsto &[a_0|\cdots |a_k]\\
    +&(-1)^{k+|a_k|(|a_0|+...+|a_{k-1}|)}[F(a_k)|a_0|\cdots|a_{k-1}]\\
    +&(-1)^{2k+(|a_{k-1}|+|a_k|)(|a_0|+...+|a_{k-2}|)}[F(a_{k-1})|F(a_k)|a_0|\cdots |a_{k-2}]\\
    +&...+(-1)^{k^2+(|a_1|+...+|a_{k}|)|a_0|}[F(a_1)|\cdots |F(a_k)|a_0].
\end{align*}
This satisfies $\bar{d}_1H+H\bar{d}_1=0$ and $d_2H+Hd_2=\mathbf{1}-(F,\id)_*$.

\end{proof}

Given two dg categories $\catC$ and $\catB$ with bimodules $M$ and $N$ we define 
	$$Sh:C_\bullet(\catC,M)\otimes C_\bullet(\catB,N)\to C_\bullet(\catC\otimes \catB,M\otimes N)$$
	by the formula
	\begin{align*}
m[f_1|\cdots |f_k]\otimes n[g_1|\cdots |g_l]\mapsto 	\quad\quad\quad\quad\quad\quad\quad\quad\quad\quad\quad\quad\quad\quad\quad\quad\quad\quad\quad\quad\quad\quad\\\sum_{\sigma\in (k,l)-shuffles}(-1)^{|\sigma|+\epsilon} m\otimes n[\cdots |f_{i}\otimes 1|\cdots|1\otimes g_j|\cdots|f_{i+1}\otimes 1|\cdots |1\otimes g_j|\cdots].
\end{align*}
The sign in the formula above is given by 
$$\epsilon=\sum |g_t||f_s|$$
where the sum is over all pairs $(f_s,g_t)$, $0\leq s\leq k$ and $0\leq t\leq l$, such that $f_s$ appears after $g_t$ (where $f_0=m$ and $g_0=n$).
An important tool when working with Hochschild homology is the Künneth isomorphism.
\begin{prop}\label{kunneth}
The map $Sh$ defined above induces an isomorphism
$$HH_\bullet(F,\catC)\otimes HH_\bullet(G,\catB)\to HH_\bullet(F\otimes G,\catC\otimes \catB).$$

When $\catC=\catB$ this isomorphism commutes with the $S_2$ action. \end{prop}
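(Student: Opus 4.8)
The plan is to reduce the statement to the classical Künneth theorem for complexes of $k$-vector spaces, which holds since $k$ is a field. First I would observe that the standard complex $C_\bullet(\catC, M)$ is, after forgetting the Hochschild differential $d_2$, just the total complex of a bicomplex whose rows are direct sums of tensor products of the chain complexes $\catC(c_i,c_{i+1})$ and $M(c_1,c_0)$. The shuffle map $Sh$ is then the standard Eilenberg--Zilber shuffle, and the key point is that it is a chain map for the \emph{total} differential $d_1 \pm d_2$; this is a formal verification of the same type as in the classical setting (and is already implicitly used whenever one writes down $Sh$). So the real content is that $Sh$ is a quasi-isomorphism.

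To see that $Sh$ is a quasi-isomorphism, I would first filter both sides by the simplicial (bar) degree $n$, or equivalently run the spectral sequence of the bicomplex with $d_1$ as the ``vertical'' differential taken first. Concretely, $C_n(\catC,M)$ with differential $d_1$ computes, in each homological degree, the homology of $\bigoplus_{c_0,\dots,c_n} H_\bullet(M(c_1,c_0)) \otimes H_\bullet(\catC(c_2,c_1)) \otimes \cdots \otimes H_\bullet(\catC(c_0,c_n))$, using the ordinary Künneth isomorphism over the field $k$ (each summand is a tensor product of complexes of $k$-vector spaces). The same applies to $\catB, N$ and to $\catC\otimes\catB, M\otimes N$, and on these $E_1$-pages the map induced by $Sh$ is precisely the Eilenberg--Zilber shuffle equivalence for the bisimplicial object built from $H_\bullet(\catC)$ and $H_\bullet(\catB)$ — here the objects $\mathrm{Bar}^n$ being \emph{free} $\catC\otimes\catC^{op}$-modules (noted in the excerpt) is what lets one identify the relevant tensor-of-homology with homology-of-tensor. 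One then invokes the classical fact that the shuffle map $C_\bullet(X)\otimes C_\bullet(Y) \to C_\bullet(X\times Y)$ of simplicial $k$-modules is a quasi-isomorphism (Eilenberg--Zilber), so the map is an isomorphism on $E_2$, hence an isomorphism on abutments. Alternatively, and perhaps more cleanly, I would argue at the level of semi-free resolutions: $\mathrm{Bar}(\catC)\otimes\mathrm{Bar}(\catB)$ is a semi-free resolution of $\Delta^l_{\catC\otimes\catB} \cong \Delta^l_\catC \otimes \Delta^l_\catB$ as a $(\catC\otimes\catB)^e$-module (semi-freeness of a tensor product of semi-free modules, plus Lemma \ref{lemmaontensoringfree} applied twice to reduce to comparing with $\mathrm{Bar}(\catC\otimes\catB)$), and then $Sh$ realizes the comparison of resolutions; since $(M\otimes N)\otimes_{(\catC\otimes\catB)^e}(\mathrm{Bar}(\catC)\otimes\mathrm{Bar}(\catB)) \cong (M\otimes_{\catC^e}\mathrm{Bar}(\catC))\otimes(N\otimes_{\catB^e}\mathrm{Bar}(\catB))$ canonically, the result follows.

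For the final sentence, when $\catC=\catB$ and $F=G$, I would check directly from the defining formula that $Sh$ intertwines the swap $C_\bullet(\catC,F)\otimes C_\bullet(\catC,F) \to C_\bullet(\catC,F)\otimes C_\bullet(\catC,F)$ (with the Koszul sign coming from the grading) with the $S_2$ action on $C_\bullet(\catC\otimes\catC, F\otimes F)$ induced by the swap of the two tensor factors of $\catC$ — this is exactly the statement that the Eilenberg--Zilber map is symmetric (commutes with the flip), and amounts to matching the sum over $(k,l)$-shuffles with the sum over $(l,k)$-shuffles and comparing the signs $\epsilon$ and $|\sigma|$ against the Koszul sign of the transposition. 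I expect the main obstacle to be purely bookkeeping: getting the Koszul signs in the shuffle formula to line up with the signs $(-1)^n$ twisting $d_1$ in the total complex, both for the chain-map property and for the $S_2$-equivariance. None of this is conceptually hard, but the sign verification is the step most likely to require genuine care; the cleanest route is probably to phrase everything via the functor $dg\text{-}cat^+ \to D(k)$ and the symmetric monoidal Dold--Kan/Eilenberg--Zilber machinery so that the signs are handled once and for all by a general principle rather than recomputed by hand.
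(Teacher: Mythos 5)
Your argument is correct and is essentially the standard Eilenberg--Zilber/comparison-of-bar-resolutions proof; the paper itself gives no argument here, but simply cites Mac Lane (Theorem VIII.8.1) and the author's earlier paper, where exactly this reasoning is carried out. Either of your two routes (the spectral sequence of the simplicial filtration, or exhibiting $\mathrm{Bar}(\catC)\otimes\mathrm{Bar}(\catB)$ as a semi-free resolution of $\Delta^l_{\catC\otimes\catB}$ and using that $(M\otimes N)\otimes_{(\catC\otimes\catB)^e}(\mathrm{Bar}(\catC)\otimes\mathrm{Bar}(\catB))$ factors as the tensor product of the two standard complexes) would serve as a complete proof, and the sign bookkeeping for the $S_2$-statement is, as you anticipate, the only point requiring care.
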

\begin{proof}
This is well known and a proof of a more general statement can be found in \cite{maclane} theorem VIII.8.1. We also included a proof of this, but for Hochschild homology without coefficients, in \cite{ville3} and the same argument goes through with coefficients.
\end{proof}

\subsection{Hochschild homology of equivariant dg category}\label{hhforeq}
In \cite{ville3} we proved that for a small dg category $\catC$ with finite direct sums we have
\begin{align}\label{equivariantdecomp}HH_\bullet(\catC^G)\cong \bigoplus_{[g]} HH_\bullet(\catC,\rho_g)^{C(g)}.\end{align}
(See also \cite{baranovsky}, \cite{Getzler-Jones} and \cite{Quddus}, for similar decompositions of the cyclic homology of a smooth variety, cross-product algebra and cross-product dg algebra respectively.)
On the right here, the action of $C(g)$ on $HH_\bullet(\catC,\rho_g)$ is defined by $C(g)\ni h\mapsto (\rho_h,S_{h,g})_*$ where $S_{h,g}$ is the natural isomorphism defined as the composite
$$\rho_h\rho_g\cong \rho_{gh}=\rho_{hg}\cong \rho_{g}\rho_h.$$
We note here that this action is the restriction of an action of $G$
 on $\bigoplus_{g\in G}HH_\bullet(\catC,\rho_g)$ the action of $h\in G$ restricted to the summand $HH_\bullet(\catC,\rho_g)$ is given by $(\rho_h,S_{h,g,h^{-1}gh})_*:HH_\bullet(\catC,\rho_g)\to HH_\bullet(\catC,\rho_{h^{-1}gh})$ and $S_{h,g,h^{-1}gh}$ is the natural transformation defined as a composite
 $$\rho_h\rho_g\cong \rho_{gh}=\rho_{h(h^{-1}gh)}\cong \rho_{h^{-1}gh}\rho_h.$$
 
 The projections associated with the direct sum decomposition \ref{equivariantdecomp},
$$\pi_g:HH_\bullet(\catC^G)\to HH_\bullet(\catC,\rho_g)^{C(g)}$$
are induced by the morphism $(U,\alpha_g)$ in $dg-cat^+$ where $\alpha_g(c):c\to \rho_g(c)$ is part of the $G$-equivariant structure of an object $(c,\alpha)\in \Ob(\catC^G)$. The inclusions
$$HH_\bullet(\catC,\rho_g)^{C(g)}\to HH_\bullet(\catC^G)$$
are induced by $(S,\phi_g)$ where 
$$\phi_g:\bigoplus_{h\in G}\rho_h\to \bigoplus_{h\in G}\rho_g\rho_{h}$$
is given by the matrix $(\delta_{h'g=h}\theta^{-1}_{h',g})_{hh'}$.

\section{Main theorem}\label{main theorem}
Let $\catC$ be any small dg category. Let $\catD:=\catC^{\otimes n}$.  Then there is a natural $S_n$ action on the category $\catD$. Let $\sigma:=(1 \quad 2\quad 3 \quad \cdots \quad n)\in S_n$. Let $\catD^e=\catD\otimes \catD^{op}$. We will use lowercase letters $x,y,...$ for objects in $\catD$ (or $\catC$) and primed letters $x',y',...$ for objects in $\catD^{op}$ (or $\catC^{op}$). Let $\Delta^r_{\sigma,\catC}$ (or simply $\Delta_\sigma^r$ when its clear which dg category is in play) be the right $\catD\otimes\catD^{op}$-module defined by $(x,y')\mapsto \catD(x,\sigma(y'))$. If $x=(x_1,x_2,...,x_n)$ and $y'=(y_1',...,y_n')$ with $x_i\in \catC$ and $y_i'\in \catC^{op}$ then 
$$\Delta_\sigma^r(x,y)=\catC(x_1,y_2')\otimes\catC(x_2,y_3')\otimes...\otimes \catC(x_n,y_1').$$
If $M$ is any left $\catD^e$-module and $N$ is a right $\catC^e$-module then we denote by $N\otimes_{\catC^e_{ij}}M$ the left $\catC^{\otimes n-1}\otimes (\catC^{op})^{n-1} $ module
$$((x_1,...,\hat{x_i},...,x_n),(y_1',...,\hat{y}_j',...,y_n))\mapsto N(\bullet,\bullet')\otimes_{\catC^e}M(x_1,...,\underset{\text{pos }i}{\bullet},...x_n,y_1',...,\underset{\text{pos }j}{\bullet'},...y_n').$$
We can repeat this: for $i'\neq i$ and $j'\neq j$ and two right $\catC^e$-modules $N_1,N_2$ we denote by $N_1\otimes_{\catC^e_{i'j'}}(N_2\otimes_{\catC^e_{ij}}M)$ the left $\catC^{\otimes n-2}\otimes (\catC^{op})^{n-2}$
$$((x_1,...,\hat{x}_{i'},...,\hat{x_i},...,x_n),(y_1',...,\hat{y}_j',...,\hat{y}_{j'},...,y_n))\mapsto $$
$$N_1(\star,\star')\otimes_{\catC^e}(N_2(\bullet,\bullet')\otimes_{\catC^e}M(x_1,...,\underset{\text{pos }i'}{\star},...,\underset{\text{pos }i}{\bullet},...x_n,y_1',...,\underset{\text{pos }j}{\bullet'},...,\underset{\text{pos }j'}{\star'},...y_n'))$$
and so on.
\begin{lemma}\label{keylemma}
   $(i)$ We have an isomorphism of functors
    $$\Delta_\sigma^r\otimes_{\catD^e}(-)\cong \Delta_\catC^r\otimes_{\catC^e_{12}}(\cdots(\Delta_\catC^r\otimes_{\catC^e_{n-1,n}}(\Delta_\catC^r\otimes_{\catC^e_{n1}}(-)))\cdots).$$

    $(ii)$ If $\phi:\catC\to \catC'$ is a dg functor then there is an isomorphism of functors
     $$\phi^*(\Delta_{\sigma,\catC'}^r)\otimes_{\catD^e}(-)\cong \phi^*(\Delta_{\catC'}^r)\otimes_{\catC^e_{12}}(\cdots(\phi^*(\Delta_{\catC'}^r)\otimes_{\catC^e_{n-1,n}}(\phi^*(\Delta_{\catC'}^r)\otimes_{\catC^e_{n1}}(-)))\cdots).$$
     Moreover, the diagram
     $$\begin{tikzcd}
         \Delta_\sigma^r\otimes_{\catD^e}(-)\arrow{r}{\cong}\arrow{d}{\phi\otimes \id_{(-)}}& \Delta_\catC^r\otimes_{\catC^e_{12}}(\cdots(\Delta_\catC^r\otimes_{\catC^e_{n-1,n}}(\Delta_\catC^r\otimes_{\catC^e_{n1}}(-)))\cdots)\arrow{d}{\phi\otimes(\phi\otimes(\cdots \otimes \id_{(-)}))}\\
         \phi^*(\Delta_{\sigma,\catC'}^r)\otimes_{\catD^e}(-)\arrow{r}{\cong}& \phi^*(\Delta_{\catC'}^r)\otimes_{\catC^e_{12}}(\cdots(\phi^*(\Delta_{\catC'}^r)\otimes_{\catC^e_{n-1,n}}(\phi^*(\Delta_{\catC'}^r)\otimes_{\catC^e_{n1}}(-)))\cdots)
     \end{tikzcd}$$
     commutes.
\end{lemma}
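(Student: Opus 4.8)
The plan is to prove part $(i)$ by writing out both sides explicitly as quotients of direct sums and exhibiting a natural bijection between the generating tensors, then deduce part $(ii)$ by checking that the functor $\phi^*$ interacts with everything in the obvious way and that all squares commute on the level of these explicit generators.

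First I would unwind the left-hand side of $(i)$. For a left $\catD^e$-module $M$, the complex $\Delta_\sigma^r\otimes_{\catD^e}M$ is the cokernel of the bar-type differential, so it is spanned by classes of elements $\delta\otimes m$ with $\delta\in\Delta_\sigma^r(x,y')$ and $m\in M(y,x')$, modulo the relation that moves morphisms of $\catD$ across the tensor sign. Since $\catD=\catC^{\otimes n}$, a morphism in $\catD$ factors as an $n$-tuple of morphisms in $\catC$, and $\Delta_\sigma^r(x,y')=\catC(x_1,y_2')\otimes\cdots\otimes\catC(x_n,y_1')$ exactly because $\sigma$ is the $n$-cycle. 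So I would rewrite a generator of the left side as a big tensor over all $n$ coordinates, where coordinate $i$ contributes a factor $\catC(x_i,y_{i+1}')$ (indices mod $n$), together with $m\in M$. On the right-hand side, iterating the definition of $N\otimes_{\catC^e_{ij}}(-)$ given right before the lemma, the innermost $\Delta_\catC^r\otimes_{\catC^e_{n1}}(-)$ contracts coordinate $n$ of the source and coordinate $1$ of the target against a copy of $\Delta_\catC^r$; the next one contracts coordinates $n-1$ and $n$ against another copy; and so on, until after all $n$ steps there are no coordinates left and one is left with a plain chain complex spanned by the same big tensors. The matching of indices — coordinate $i$ of $\Delta_\catC^r$ pairing $x_i$ with $y_{i+1}'$ — is precisely what the chosen cyclic $\sigma$ encodes, so the bijection on generators is the identity once one lines up the bookkeeping. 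I would then check that the defining relations (moving a $\catC^e$-morphism across a tensor) on the two sides correspond, which is a direct comparison; this is essentially the same argument as in the warm-up Lemma, generalized from $n=2$ categories to $n$ and to an arbitrary base $\catC$, so I would phrase it as an induction on $n$, peeling off one $\Delta_\catC^r\otimes_{\catC^e_{ij}}(-)$ at a time and invoking Proposition \ref{proponoriginaltensorproduct} (or rather its bimodule analogue) to collapse a free factor.

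For part $(ii)$, the point is that $\phi^*(\Delta_{\sigma,\catC'}^r)$ and $\phi^*(\Delta_{\catC'}^r)$ are, by definition of pullback of modules along $\phi^{\otimes n}$ (respectively $\phi^e$), again given coordinatewise: $\phi^*(\Delta_{\catC'}^r)(x,y')=\catC'(\phi x,\phi y')$, and likewise for the $\sigma$-twisted version one gets $\catC'(\phi x_1,\phi y_2')\otimes\cdots$. So the identical unwinding as in $(i)$ applies verbatim with $\catC'$-hom-complexes in place of $\catC$-hom-complexes, giving the claimed isomorphism. The commuting square is then checked on generators: the vertical map $\phi\otimes\id_{(-)}$ sends a tensor $\delta\otimes m$ to $(\phi^{\otimes n}\delta)\otimes m$, and under the horizontal identifications this is exactly the map that applies $\phi$ to each of the $n$ contracted $\Delta^r$-factors while leaving $M$ alone, i.e. $\phi\otimes(\phi\otimes(\cdots\otimes\id_{(-)}))$. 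Everything is natural in $M$ because all the maps involved are induced by maps of the representing modules.

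The main obstacle I anticipate is purely organizational rather than mathematical: keeping the index bookkeeping straight through the $n$-fold iterated tensor product, in particular making sure the "hatted" coordinates in the definition of $N_1\otimes_{\catC^e_{i'j'}}(N_2\otimes_{\catC^e_{ij}}M)$ are contracted in the right order so that, after all $n$ steps, coordinate $i$ of the $i$-th $\Delta_\catC^r$ really does glue $x_i$ to $y_{i+1}'$ with indices read cyclically. Getting the signs and the order of contraction consistent with the convention "matrices act on the left" and with the specific cyclic $\sigma$ chosen in the main theorem is where care is needed; once the $n=1$ and $n=2$ cases are written out (the latter is essentially the warm-up Lemma), the inductive step should be a routine, if notation-heavy, verification, and I would present it as such rather than belabor it.
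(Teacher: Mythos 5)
Your proposal is correct and follows essentially the same route as the paper: both sides are exhibited as quotients of the one big direct sum $\bigoplus \catC(x_1,y_2')\otimes\cdots\otimes\catC(x_n,y_1')\otimes M(x_1,\ldots,y_n')$, and the two quotient maps are shown to have the same kernel by the peel-off-one-contraction-at-a-time lifting argument modeled on the warm-up computation of Section \ref{warmup}, with part $(ii)$ and the commuting square handled on generators via the universal property of cokernels. The only cosmetic difference is that the paper does not invoke Proposition \ref{proponoriginaltensorproduct} inside this lemma (that is used later, in the claims of Proposition \ref{keycomputation}); it works directly with the explicit relations $\delta(\cdots)$.
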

\begin{proof}
    Fix a left $\catD$-module $M$. Then both sides of the isomorphism is a quotient of 
    $$\bigoplus_{x_1,...,x_n\in\catC,y_1',...,y_n'\in \catC^{op}}\catC(x_1,y_2')\otimes\catC(x_2,y_3')\otimes...\otimes \catC(x_n,y_1')\otimes M(x_1,...,x_n,y_1',...,y_n').$$
      The maps in the following diagram are the projections associated with the various tensor products (here we have omitted the $\oplus$ to save space):
    $$\begin{tikzcd}
\catC(x_1,y_2')\otimes\catC(x_2,y_3')\otimes...\otimes \catC(x_n,y_1')\otimes M(x_1,...,x_n,y_1',...,y_n')\arrow{r}{\pi}\arrow{d}{p_n}&\Delta_\sigma^l\otimes_{\catD^e}M \\
\catC(x_1,y_2')\otimes\catC(x_2,y_3')\otimes...\otimes (\Delta_\catC^r\otimes_{\catC^e_{n1}} M(\bullet,...,x_n,y_1',...,\bullet'))\arrow{d}{p_{n-1}}&\\
\vdots\arrow{d}{p_1}&\\
\Delta_\catC^{r}\otimes_{\catC^e_{12}}(\Delta_\catC^r\otimes_{\catC^e_{23}}(...\otimes_{\catC^e_{n-1,n}} (\Delta_\catC^r\otimes_{\catC^e_{n1}} M)\cdots))
\end{tikzcd}$$
We will show that $\ker(\pi)=\ker(p_1p_2\cdots p_n)$. If $t\in \ker(\pi)$ then it can be written as a sum of elements of the form
$$\delta((\alpha_1,...,\alpha_n)\otimes (f_1,...,f_n,g_1',...,g_n)\otimes m):=$$
$$(g_2'\alpha_1f_1,g_3'\alpha_2f_2,...,g_1'\alpha_nf_n)\otimes m-(\alpha_1,\alpha_2,...,\alpha_n)\otimes (f_1\otimes...\otimes f_n)m(g_1'\otimes...\otimes g_n').$$
Now 
\begin{align*}p_1\cdots &p_n((g_2'\alpha_1f_1,g_3'\alpha_2f_2,...,g_1'\alpha_nf_n)\otimes m)\\
=&p_1\cdots p_{n-1}((g_2'\alpha_1f_1,g_3'\alpha_2f_2,...,\alpha_n)\otimes (1\otimes  ...\otimes 1\otimes f_n)m(g_1'\otimes 1\otimes...\otimes 1))\\
=&...\\
=&(\alpha_1,\alpha_2,...,\alpha_n)\otimes (f_1\otimes...\otimes f_n)m(g_1'\otimes...\otimes g_n')\end{align*}
This proves that $p_1\cdots p_n(t)=0$.

Conversely, suppose $p_1\cdots p_n(t)=0$. Then $\bar{t}:=p_2\cdots p_{n}(t)\in \ker(p_n)$. Therefore $\bar{t}$ can be written as a sum of elements of the form
$$[(f\alpha_1g,...,\alpha_n)\otimes m-(\alpha_1,...,\alpha_n)\otimes (f\otimes 1\otimes 1\otimes...\otimes 1)m(1\otimes g'\otimes...\otimes 1)]$$
where we use brackets to indicate that we are in the source of $p_1$ where elements already are equivalence classes. We can lift $\bar{t}$ to an element $t_1$ which is a sum of terms of the form
$$(f\alpha_1g',...,\alpha_n)\otimes m-(\alpha_1,...,\alpha_n)\otimes (f\otimes 1\otimes 1\otimes...\otimes 1)m(1\otimes g\otimes...\otimes 1)$$
(without the brackets now). Then $\pi(t_1)=0$ and $p_2\cdots p_{n}(t-t_1)=0$. Now we repeat this process to get elements $t_1,...,t_n\in \ker(\pi)$ and $t=t_1+...+t_n$. This completes the proof.

The proof of the isomorphism in part $(i)$ is identical to the first part. For the commutative diagram, note that we have a map
$$\begin{tikzcd}
    \underset{x_1,...,x_n\in\catC,y_1',...,y_n'\in \catC^{op}}{\bigoplus}\catC(x_1,y_2')\otimes...\otimes \catC(x_n,y_1')\otimes M(x_1,...,x_n,y_1',...,y_n')\arrow{d}{\phi\otimes\phi\otimes...\otimes \id_M}\\
    \underset{x_1,...,x_n\in\catC,y_1',...,y_n'\in \catC^{op}}{\bigoplus}\catC'(\phi (x_1),\phi(y_2'))\otimes...\otimes \catC'(\phi(x_n),\phi(y_1'))\otimes M(x_1,...,x_n,y_1',...,y_n').
\end{tikzcd}$$
Commutativity of the diagram now follows from the universal property of cokernels.
\end{proof}

Now we have the following key computation.

\begin{prop}\label{keycomputation}
    We have a natural isomorphism $HH_\bullet(\catC^{\otimes n},\sigma)\cong HH_\bullet(\catC)$.
\end{prop}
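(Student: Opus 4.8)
The plan is to follow the warm-up computation, using Lemma \ref{keylemma} to reduce the case of a general $n$-cycle to an $(n-1)$-fold iteration of the argument carried out there. Write $\catD:=\catC^{\otimes n}$ and $\sigma:=(1\ 2\ \cdots\ n)$. Since $\text{Bar}(\catD)=Tot^\oplus(\text{Bar}^\bullet(\catD))$ is a semi-free (hence h-flat) resolution of $\Delta_\catD^l$, the object $HH_\bullet(\catD,\sigma)=\Delta_\sigma^r\otimes^L_{\catD^e}\Delta_\catD^l$ of $D(k)$ is represented by $\Delta_\sigma^r\otimes_{\catD^e}\text{Bar}(\catD)$. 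By Lemma \ref{keylemma}$(i)$ this chain complex is isomorphic to $\Delta_\catC^r\otimes_{\catC^e_{12}}R$, where
\[
R:=\Delta_\catC^r\otimes_{\catC^e_{23}}\big(\cdots\big(\Delta_\catC^r\otimes_{\catC^e_{n-1,n}}\big(\Delta_\catC^r\otimes_{\catC^e_{n1}}\text{Bar}(\catD)\big)\big)\cdots\big)
\]
is the complex of left $\catC^e$-modules obtained by applying only the $n-1$ innermost operations; the two variables it retains are position $1$ of $\catC$ and position $2$ of $\catC^{op}$, so on $R$ the outermost operation $\Delta_\catC^r\otimes_{\catC^e_{12}}(-)$ is simply $\Delta_\catC^r\otimes_{\catC^e}(-)$. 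Thus $HH_\bullet(\catD,\sigma)$ is represented by $\Delta_\catC^r\otimes_{\catC^e}R$, and it suffices to prove that $R$, with the augmentation induced by $\text{Bar}(\catD)\to\Delta_\catD^l$, is a semi-free resolution of $\Delta_\catC^l$: for then $\Delta_\catC^r\otimes_{\catC^e}R$ represents $\Delta_\catC^r\otimes^L_{\catC^e}\Delta_\catC^l=HH_\bullet(\catC)$ by definition.

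The heart of the matter is that each term $R^m$ — the result of applying the $n-1$ operations to the free $\catD^e$-module $\text{Bar}^m(\catD)$ — is a free $\catC^e$-module, so that $R$, a totalization of a complex of free modules, is semi-free. The mechanism is the isomorphism $\Delta_\catC^r\otimes_{\catC^e}h_{(a,b')}\cong\Delta_\catC^r(a,b')=\catC(a,b)$ for $h_{(a,b')}$ the representable left $\catC^e$-module on an object $(a,b')$ of $\catC^e$, which is a special case of Proposition \ref{proponoriginaltensorproduct}. Because $\sigma$ is an $n$-cycle, every operation $\Delta_\catC^r\otimes_{\catC^e_{ij}}(-)$ occurring has $i\ne j$, so it contracts the $i$-th ``$\catC$''-leg of $\text{Bar}^m(\catD)$ against the $j$-th ``$\catC^{op}$''-leg; these lie in different tensor factors (the first in the $\catD(c_0,x)$-factor, the second in the $\catD(y,c_m)$-factor) and each is freely generated, so the isomorphism above turns the free module into one that is again free over the category with one tensor factor fewer. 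Iterating $n-1$ times gives the claim. It is exactly here that the cyclic, ``off-diagonal'' nature of $\sigma$ is essential: contracting a leg against a leg of the same tensor factor — as one would for the identity permutation, or for $e_{11}$ rather than $e_{21}$ in the warm up — would instead leave a copy of the (generally non-free) diagonal bimodule, and the computation would break.

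For the augmentation $R\to\Delta_\catC^l$: the modules $\text{Bar}^m(\catD)$ and $\Delta_\catD^l$ are free, hence semi-free, as left $\catC^e$-modules in positions $(n,1)$ — here again $n\ne 1$ is precisely what makes $\Delta_\catD^l$ free, rather than diagonal, in those positions — so Lemma \ref{lemmaontensoringfree} shows that applying $\Delta_\catC^r\otimes_{\catC^e_{n1}}(-)$ carries the quasi-isomorphism $\text{Bar}(\catD)\overset{\sim}{\to}\Delta_\catD^l$ to a quasi-isomorphism. By the previous paragraph the outputs are again free in positions $(n-1,n)$, so we may repeat; after $n-1$ steps we obtain a quasi-isomorphism $R\overset{\sim}{\to}\Delta_\catC^r\otimes_{\catC^e_{23}}(\cdots(\Delta_\catC^r\otimes_{\catC^e_{n1}}\Delta_\catD^l)\cdots)$, and a short telescoping computation with representable modules (each application chews the bimodule $\Delta_\catD^l$ back by one strand) identifies the target with $\Delta_\catC^l$. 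This proves that $R$ is a semi-free resolution of $\Delta_\catC^l$, hence the proposition. Naturality in $\catC$ is then automatic: the bar resolution and the comparison map $\Delta_\sigma^r\otimes_{\catD^e}\text{Bar}(\catD)\to HH_\bullet(\catD,\sigma)$ are natural, the isomorphism of the first paragraph is natural by Lemma \ref{keylemma}$(ii)$, and $R\to\Delta_\catC^l$ is a map of functorial semi-free resolutions.

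I expect the main obstacle to be the freeness statement of the second paragraph: making precise the picture that ``the $n$-cycle splices the $n$ strands of the bar complex of $\catC^{\otimes n}$ back into the bar complex of $\catC$'' and turning it into a clean inductive proof that every $R^m$ is free, while correctly bookkeeping which legs are contracted at each of the $n-1$ stages. Once that is established, the resolution property and the naturality follow formally from Lemmas \ref{keylemma} and \ref{lemmaontensoringfree} together with Proposition \ref{proponoriginaltensorproduct}.
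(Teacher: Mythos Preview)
Your proposal is correct and follows essentially the same route as the paper: decompose $\Delta_\sigma^r\otimes_{\catD^e}(-)$ via Lemma \ref{keylemma}, verify that each intermediate stage applied to $\text{Bar}^m(\catD)$ stays free (the paper's Claim 1) and that the same iterated operations carry $\Delta_\catD^l$ to $\Delta_\catC^l$ through free intermediaries (the paper's Claim 2), then invoke Lemma \ref{lemmaontensoringfree} and Proposition \ref{proponoriginaltensorproduct} to conclude. The freeness verification you flag as the main obstacle is exactly what the paper isolates and proves directly via Proposition \ref{proponoriginaltensorproduct}, so your expectation is on target and no additional idea is needed.
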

\begin{proof}
   We resolve $\Delta^l_\catD$ by the bar resolution $\text{Bar}(\catD)$ as in section \ref{hh}. To obtain $HH_\bullet(\catD,\sigma)$ we should apply $\Delta_{\sigma}^r\otimes_{\catD^e}(-)$ to $\text{Bar}(\catD)$. By the lemma this is isomorphic to what we get by first applying $\Delta_\catC^r\otimes_{\catC^e_{n1}}(-)$ and then $\Delta_\catC^r\otimes_{\catC^e_{n-1,n}}(-)$ and so on until finally applying $\Delta_\catC^r\otimes_{\catC^e_{12}}(-)$. \\
   
 \textit{Claim 1:} $\Delta_\catC^r\otimes_{\catC^e_{i,i+1}}(\cdots(\Delta_\catC^r\otimes_{\catC^e_{n-1,n}}(\Delta_\catC^r\otimes_{\catC^e_{n1}}(\text{Bar}^n(\catD)))\cdots)$ is free as $\catC^e_{i-1,i}$-module for $i=2,...,n$ (if $i=n$ then $i+1$ should be interpreted as $n$).\\

 \textit{Proof of claim 1}: Note that $\text{Bar}^n(\catD)$ is free as $\catD^e$-module. So it can be written as a direct sum of modules of the form $h_{(a_1,...,a_n,b_1',...,b_n')}\otimes V$ where $V$ is a chain complex. But by proposition \ref{proponoriginaltensorproduct} we then have
 $$\Delta_\catC^r\otimes_{\catC^e_{n1}}h_{(a_1,...,a_n,b_1',...,b_n')}\otimes V=\catC(a_n,b_1')\otimes h_{(a_1,...,a_{n-1},b_2',...,b_n')}\otimes V$$
 is still free as a $\catC^{\otimes n-1}\otimes(\catC^{\otimes n-1})^{op}$-module. If $n=2$ then we are done and if $n>2$ then we can just repeat the argument.\\

 \textit{Claim 2:} $\Delta_\catD^l$ is free as a $\catC^e_{n1}$-module. Moreover, $\Delta_\catC^r\otimes_{\catC^e_{i,i+1}}(\cdots(\Delta_\catC^r\otimes_{\catC^e_{n-1,n}}(\Delta_\catC^r\otimes_{\catC^e_{n1}}(\Delta_\catD^l))\cdots)$ is free as $\catC^e_{i-1,i}$-module for $i=3,...,n$ (for $i=n$ $i+1$ should be interpreted as $1$) and is isomorphic to $\Delta_\catC^l$ for $i=2$.\\ 
 
 \textit{Proof of claim 2:} Using proposition \ref{proponoriginaltensorproduct} we see that 
 \begin{align*}
     \Delta_\catD^l:(x_1,...,x_n,y_1',...,y_n')\mapsto &\catC(y_1',x_1)\otimes...\otimes \catC(y_n',x_n)\\
     \Delta_\catC^r\otimes_{\catC^e_{n1}}(\Delta_\catD^l)(x_1,...,x_{n-1},y_2',...,y_{n}')\mapsto &\\
     \catC(y_{n}',x_1)\otimes &\catC(y_2',x_2)\otimes...\otimes \catC(y_{n-1}',x_{n-1})\\
      \Delta_{\catC}^r\otimes_{\catC^e_{n-1,n}}(\Delta_\catC^r\otimes_{\catC^e_{n1}}(\Delta_\catD^l))(x_1,...,x_{n-2},y_2',...,y_{n-1}')\mapsto & \\
      \catC(y_{n-1}',x_1)\otimes &\catC(y_2',x_2)\otimes...\otimes \catC(y_{n-2}',x_{n-2})\\
      \vdots &\\
      \Delta_\catC^r\otimes_{\catC^e_{2,3}}(\cdots(\Delta_\catC^r\otimes_{\catC^e_{n-1,n}}(\Delta_\catC^r\otimes_{\catC^e_{n1}}(\Delta_\catD^l))\cdots)(x_1,y_2')\mapsto&\catC(y_2',x_1)
 \end{align*}
 where at each stage, the $\catC_{ij}^e$-module structure comes from simply thinking of the expressions on the right as a functor in $(x_i,y_j')$. From this the claim follows.\\

Note that tensoring commutes with forming direct sum totalizations. It then follows from the two claims, and lemma \ref{lemmaontensoringfree}, that $$\begin{tikzcd}Tot^{\oplus}(\Delta_\catC^r\otimes_{\catC^e_{2,3}}(\cdots(\Delta_\catC^r\otimes_{\catC^e_{n-1,n}}(\Delta_\catC^r\otimes_{\catC^e_{n1}}(\text{Bar}^\bullet(\catD)))\cdots))\arrow{d}{\sim}&\\ \Delta_\catC^r\otimes_{\catC^e_{i,i+1}}(\cdots(\Delta_\catC^r\otimes_{\catC^e_{n-1,n}}(\Delta_\catC^r\otimes_{\catC^e_{n1}}(\Delta_\catD^l))\cdots)\arrow{r}{\cong}&\Delta_\catC^l\end{tikzcd}$$
is a semi-free resolution of $\Delta_\catC^l$. Therefore after applying $\Delta_\catC^r\otimes_{\catC^e_{12}}(-)$ to 
$$Tot^{\oplus}(\Delta_\catC^r\otimes_{\catC^e_{2,3}}(\cdots(\Delta_\catC^r\otimes_{\catC^e_{n-1,n}}(\Delta_\catC^r\otimes_{\catC^e_{n1}}(\text{Bar}^\bullet(\catD)))\cdots))$$ we obtain $HH_\bullet(\catC)$.
\\

For naturality, suppose we have a dg functor $\phi:\catC\to \catC'$. We denote by $\catD=\catC^{\otimes n}$ and $\catD'=\catC'^{\otimes n}$. We have a commutative diagram (where the top two squares commute by lemma \ref{keylemma})
$$\begin{tikzcd}
\Delta^r_{\sigma,\catC}\otimes_{\catD^e}\text{Bar}^\bullet(\catD)\arrow{d}{}\arrow{r}{\cong}&\Delta_{\catC}^r\otimes_{\catC^e_{12}}(\Delta^r_{\catC}\otimes_{\catC_{23}^e}\cdots \otimes_{\catC^{e}_{n1}}\text{Bar}^\bullet(\catD))\arrow{d}{}\\
\Delta^r_{\sigma,\catC}\otimes_{\catD^e}\phi^*(\text{Bar}^\bullet(\catD'))\arrow{d}{}\arrow{r}{\cong}&\Delta_{\catC}^r\otimes_{\catC^e_{12}}(\Delta^r_{\catC}\otimes_{\catC_{23}^e}\cdots \otimes_{\catC^{e}_{n1}}\phi^*(\text{Bar}^\bullet(\catD')))\arrow{d}{}\\
\phi^*(\Delta^r_{\sigma,\catC'})\otimes_{\catD^e}\phi^*(\text{Bar}^\bullet(\catD'))\arrow{dd}{}\arrow{r}{\cong}&\phi^*(\Delta_{\catC'}^r)\otimes_{\catC^e_{12}}(\phi^*(\Delta^r_{\catC'})\otimes_{\catC_{23}^e}\cdots \otimes_{\catC^{e}_{n1}}\phi^*(\text{Bar}^\bullet(\catD')))\arrow{d}{}\\
&\phi^*(\Delta_{\catC'}^r)\otimes_{\catC^e_{12}}\phi^*(\Delta^r_{\catC'}\otimes_{(\catC')_{23}^e}\cdots \otimes_{(\catC')^{e}_{n1}}\text{Bar}^\bullet(\catD'))\arrow{d}{}\\
\Delta^r_{\sigma,\catC'}\otimes_{\catD'^e}\text{Bar}^\bullet(\catD')\arrow{r}{\cong}&\Delta_{\catC'}^r\otimes_{(\catC')^e_{12}}(\cdots \otimes_{(\catC')^{e}_{n1}}\text{Bar}^\bullet(\catD'))
\end{tikzcd}$$
where composing the maps on the left gives the induced map $HH_\bullet(\catC^{\otimes n},\sigma)\to HH_\bullet(\catC'^{\otimes n},\sigma)$ and composing the maps on the right gives the induced map $HH_\bullet(\catC)\to HH_\bullet(\catC')$.
\end{proof}
Now before we state and prove our main theorem we should explain the notation in the isomorphism (\ref{iso}) from the introduction. Given a dg category $\catC$ we denote by $\Sym^n(\catC)$ the dg category $\widehat{\catC^{\otimes n}}^{S_n}$ where the $S_n$ action on $\widehat{\catC^{\otimes n}}$ is an extension of the action described in example \ref{symmetricgroupaction}. The space $V:=\bigoplus_{i\geq 1}HH_\bullet(\catC)t^i$ is bigraded: elements in $HH_n(\catC)t^m$ are said to be in \textit{Hochschild degree} $n$ and $t-degree$ $m$. When we write $\Sym^\bullet(\bigoplus_{i\geq 1}HH_\bullet(\catC)t^i)$ we mean the graded symmetric powers where we only take the Hochschild degree into account. Since we are working over a field of characteristic zero we can either think of $\Sym^\bullet(V)$ as a quotient of the tensor algebra $T^\bullet(V)$ or as a subalgebra of $T^\bullet(V)$. To distinguish between these we use $S^\bullet(V)$ for the quotient and $\Sym^\bullet(V)$ for the subalgebra. 

For a moment, let $H:=HH_\bullet(\catC)$. For each $n\geq 0$ the following subspace of $T^\bullet(V)$ maps isomorphically onto the $t$-degree $n$ part of $S^\bullet(V)$:
\begin{align}\label{summands}\bigoplus_{a_1,...,a_n}\Sym^{a_1}(H)t^{a_1}\otimes \Sym^{a_2}(H)t^{2a_2}\otimes...\otimes \Sym^{a_n}(H)t^{na_n}\end{align}
where the direct sum runs over all $a_1,...,a_n\geq 0$ such that $\sum ia_i=n$.

\begin{theorem}\label{main theorem}
Let $\catC$ be a small dg category. Then we have a natural isomorphism of graded vector spaces
$$\bigoplus_{n\geq 0}HH_\bullet(\text{Sym}^n(\catC))t^n\cong S^\bullet(\oplus_{i\geq 1}HH_\bullet(\catC)t^i).$$
\end{theorem}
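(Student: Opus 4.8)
The plan is to assemble the three ingredients already developed in the paper and then carefully match up indexing data on both sides. First I would invoke the decomposition \eqref{equivariantdecomp} from Section~\ref{hhforeq} applied to the $S_n$-action on $\widehat{\catC^{\otimes n}}$, which gives
\[
HH_\bullet(\Sym^n(\catC))\cong\bigoplus_{[\sigma]\in S_n/\sim}HH_\bullet(\widehat{\catC^{\otimes n}},\rho_\sigma)^{C(\sigma)}.
\]
Since $HH_\bullet$ is invariant under passing to the pretriangulated hull (this is a standard Morita-invariance fact one should cite, e.g.\ from \cite{ville3} or \cite{keller3}), we may replace $\widehat{\catC^{\otimes n}}$ by $\catC^{\otimes n}$ and $\rho_\sigma$ by the honest permutation endofunctor. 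Conjugacy classes in $S_n$ are indexed by partitions; writing a class as a partition with $a_i$ parts equal to $i$ (so $\sum_i ia_i=n$), I choose the block-diagonal representative $\sigma=\sigma_1\cdots\sigma_k$ built from disjoint cycles of the prescribed lengths, exactly as in the introduction.

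Next I would apply the Künneth isomorphism (Proposition~\ref{kunneth}), which is compatible with the relevant finite group actions, to split $HH_\bullet(\catC^{\otimes n},\sigma)$ along the cycle decomposition of $\sigma$:
\[
HH_\bullet(\catC^{\otimes n},\sigma)\cong\bigotimes_{\text{cycles }c\text{ of }\sigma}HH_\bullet(\catC^{\otimes|c|},\sigma_{|c|}),
\]
where $\sigma_\ell$ is an $\ell$-cycle generating the cyclic group acting on $\catC^{\otimes\ell}$. Then Proposition~\ref{keycomputation} identifies each factor $HH_\bullet(\catC^{\otimes\ell},\sigma_\ell)$ with $HH_\bullet(\catC)$. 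To take the centralizer invariants I would use the standard description $C(\sigma)\cong\prod_i(\Z/i\wr S_{a_i})$: the cyclic factor $\Z/i$ acts on $HH_\bullet(\catC^{\otimes i},\sigma_i)\cong HH_\bullet(\catC)$, and here I appeal to Lemma~\ref{trivialaction} — the generator of $\Z/i$ corresponds, after the identification of Proposition~\ref{keycomputation}, to $(\mathrm{id},\id)_*$ up to the natural transformation $S_{h,g}$, which one checks acts trivially — so that this part of the action is trivial; the symmetric group $S_{a_i}$ then permutes the $a_i$ tensor factors $HH_\bullet(\catC)^{\otimes a_i}$ with the Koszul sign, and taking invariants yields $\Sym^{a_i}(HH_\bullet(\catC))$. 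Assembling over all $i$ and inserting the bookkeeping variable $t$ (a cycle of length $i$ contributes $t^i$), the summand indexed by the partition $(a_1,a_2,\dots)$ becomes
\[
\bigotimes_i\Sym^{a_i}(HH_\bullet(\catC))t^{ia_i},
\]
which is precisely a summand of \eqref{summands}.

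Finally I would check that as $n$ ranges over $\N$ and $[\sigma]$ over conjugacy classes in $S_n$, these summands range exactly once over all the pieces appearing in \eqref{summands}, i.e.\ over all tuples $(a_1,a_2,\dots)$ of nonnegative integers with finite support; collecting them gives $S^\bullet(\bigoplus_{i\ge1}HH_\bullet(\catC)t^i)$ graded by Hochschild degree, as claimed. Naturality in $\catC$ follows by tracking a dg functor $\phi\colon\catC\to\catC'$ through each step: the decomposition \eqref{equivariantdecomp} is natural (its projections and inclusions are described by explicit morphisms in $\dgplus$ in Section~\ref{hhforeq}), the shuffle map $Sh$ defining the Künneth isomorphism is visibly natural, and Proposition~\ref{keycomputation} was proved to be natural via the commutative diagram there.

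The step I expect to be the main obstacle is the precise identification of the $C(\sigma)$-action under all these isomorphisms — in particular verifying that the cyclic subgroups $\Z/i$ act trivially. This requires unwinding how the natural isomorphism $S_{h,g}$ from Section~\ref{hhforeq} interacts with the chain of tensor-product identifications of Lemma~\ref{keylemma} and the Künneth map, and confirming that what remains is exactly the sign-twisted permutation action of $\prod_i S_{a_i}$ whose invariants compute $\bigotimes_i\Sym^{a_i}(HH_\bullet(\catC))$. The warm-up computation in Section~\ref{warmup} and Lemma~\ref{trivialaction} are designed to make this tractable, but the careful sign and naturality bookkeeping is where the real work lies.
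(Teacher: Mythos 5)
Your proposal is correct and follows essentially the same route as the paper's proof: the equivariant decomposition from \cite{ville3}, reduction from $\widehat{\catC^{\otimes n}}$ to $\catC^{\otimes n}$, the K\"unneth splitting along cycles, Proposition \ref{keycomputation} for each cycle, Lemma \ref{trivialaction} for triviality of the cyclic part of the centralizer (which applies directly since the $S_n$-action on $\catC^{\otimes n}$ is strict, so the twisting natural transformation is the identity), and the identification of the resulting $\prod_i S_{a_i}$-invariants with the summands of (\ref{summands}). The only cosmetic difference is that the paper phrases the passage from $\widehat{\catC^{\otimes n}}$ to $\catC^{\otimes n}$ as being induced by the inclusion $\catC^{\otimes n}\to\widehat{\catC^{\otimes n}}$ rather than as an appeal to Morita invariance, which amounts to the same justification.
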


\begin{proof}
	First we make the identifications
	\begin{align}\label{equivariantdecomp}HH_\bullet(\Sym^n(\catC))t^n\cong \bigoplus_{\lambda\vdash n}HH_\bullet(\widehat{\catC^{n}},\sigma_\lambda)^{C(\sigma_\lambda)}t^n\cong \bigoplus_{\lambda\vdash n}HH_\bullet(\catC^{n},\sigma_\lambda)^{C(\sigma_\lambda)}t^n	
	\end{align}
where the first isomorphism is the main theorem in \cite{ville3} and the second is induced by the inclusions $\catC^n\to \widehat{\catC^n}$. We can describe the partition $\lambda$ either as $n_1+n_2+...+n_k=n$ with $1\leq n_1\leq n_2\leq...\leq n_k$ or as $n=1a_1+2a_2+...+na_n$ where $a_1,...,a_n$ are non-negative integers.
We can write $C(\sigma_\lambda)=C\rtimes S$ where $C=C(\lambda)=C_2^{\times a_2}\times C_3^{\times a_3}\times...\times C_n^{\times a_n}$ is a product of cyclic groups and $S=S(\lambda)=S_{a_1}\times S_{a_2}\times...\times S_{a_n}$ is a product of symmetric groups. For any $l$ let $\sigma_l=(1 \ 2 \ \cdots \ l)$ denote the cyclic permutation on $l$ ordered elements.
Then, by the Künneth isomorphism and the proposition above, we have
\begin{align*}HH_\bullet(\catC^n,\sigma_\lambda)t^n\cong &\big(HH_\bullet(\catC^{\otimes 1},\sigma_{1})t\big)^{\otimes a_1}\otimes...\otimes \big(HH_\bullet(\catC^{\otimes n},\sigma_{a_n})t^n\big)^{\otimes a_n}\\
	\cong &\big(HH_\bullet(\catC)t\big)^{\otimes a_1}\otimes...\otimes \big(HH_\bullet(\catC)t^n\big)^{\otimes a_n}.
\end{align*}
Note that by lemma \ref{trivialaction} $C\subset C\rtimes S$ acts trivially, so after taking invariants we get
\begin{align*}
	HH_\bullet(\catC^n,\sigma_\lambda)^{C(\sigma_\lambda)}\cong & \Big(\big(HH_\bullet(\catC)t\big)^{\otimes a_1}\otimes...\otimes \big(HH_\bullet(\catC)t^n\big)^{\otimes a_n}\Big)^{S}\\
	\cong & \Sym^{a_1}(HH_\bullet(\catC))t^{a_1}\otimes...\otimes \Sym^{a_n}(HH_\bullet(\catC))t^{na_n}.\\
\end{align*}
The last expression is a summand in (\ref{summands}). Moreover, as $n$ varies as well as $a_1,...,a_n$ we get all the all the summands (\ref{summands}). The direct sum of all these then map ismomorphically onto $S^\bullet(V)$ under the projection $T^\bullet(V)\to S^\bullet(V)$. Naturality follows from the fact that the identification (\ref{equivariantdecomp}), the Künneth isomorphism and the isomorphism in proposition \ref{keycomputation} is natural in $\catC$.

\end{proof}

\end{document}